\tikzset{
v/.style={draw, fill, circle, minimum size=1.5mm, inner sep=0},
b/.style={draw , regular polygon,regular polygon sides=4, minimum size=1.5mm, inner sep=.5mm},
e/.style={very thick},
vs/.style={draw, fill, circle, minimum size=1mm, inner sep=0},
bs/.style={draw,  regular polygon,regular polygon sides=4, minimum size=2mm, inner sep=0mm},
es/.style={thick}
}
\newlength{\nodeheight}
\newlength{\nodewidth}
\numberwithin{thmcounter}{section}
\newaliascnt{thmauto}{thmcounter}
\newaliascnt{Defauto}{thmcounter}
\newaliascnt{exauto}{thmcounter}
\newaliascnt{lemauto}{thmcounter}
\newaliascnt{propauto}{thmcounter}
\newaliascnt{corauto}{thmcounter}
\newaliascnt{remauto}{thmcounter}
\newaliascnt{clmauto}{thmcounter}
\newtheorem{atheorem}{Theorem}
\newtheorem{acor}[atheorem]{Corollary}
\newtheorem{thm}[thmauto]{Theorem}
\newtheorem{lem}[lemauto]{Lemma}
\newtheorem{prop}[propauto]{Proposition}
\theoremstyle{definition}
\newtheorem{Def}[Defauto]{Definition}
\newtheorem{defn}[Defauto]{Definition}
\newtheorem{ex}[exauto]{Example}
\numberwithin{equation}{section}
\DeclareMathOperator{\Tor}{Tor}
\DeclareMathOperator{\Tot}{Tot}
\newcommand{\IE}{{}^{I}E}
\newcommand{\IIE}{{}^{II}E}
\newcommand{\twoheadlongrightarrow}{\relbar\joinrel\twoheadrightarrow}
\DeclareMathOperator{\Br}{Br}
\renewcommand{\P}{\ensuremath\operatorname{P}}
\providecommand{\fS}{\ensuremath\mathfrak{S}}
\providecommand{\tens}[1][{}]{\otimes_{#1}}
\renewcommand{\t}{\mathbbm{1}}
\newcommand{\calA}{\mathcal{A}}
\newcommand{\calB}{\mathcal{B}}
\newcommand{\calM}{\mathcal{M}}
\title{The homology of the partition algebras}
\author{Rachael Boyd}
\address{School of Mathematics and Statistics, University of Glasgow}
\email{rachael.boyd@glasgow.ac.uk}
\author{Richard Hepworth}
\address{Institute of Mathematics, University of Aberdeen}
\thanks{For the purpose of open access, the authors have applied a Creative Commons Attribution (CC BY) licence to any Author Accepted Manuscript version arising from this submission.}
\email{r.hepworth@abdn.ac.uk}
\author{Peter Patzt}
\address{Department of Mathematics, University of Oklahoma}
\email{ppatzt@ou.edu}
 \subjclass[2020]{
        20J06, 
        16E40 
        (primary),
        20B30  	
        (secondary)
    }
    \keywords{Homology, homological stability, partition algebras}
\begin{document}
\maketitle

\begin{abstract}
    We show that the homology of the partition algebras, interpreted as appropriate $\Tor$-groups, is isomorphic to that of the symmetric groups in a range of degrees that increases with the number of nodes.
    Furthermore, we show that when the defining parameter~$\delta$ of the partition algebra is invertible, the homology of the partition algebra is in fact isomorphic to the homology of the symmetric group in all degrees.
    These results parallel those obtained for the Brauer algebras in the authors' earlier work, but with significant differences and difficulties in the \emph{inductive resolution} and \emph{high acyclicity} arguments required to prove them.
    Our results join the growing literature on homological stability for algebras, which now encompasses the Temperley-Lieb, Brauer and partition algebras, as well as the Iwahori-Hecke algebras of types $A$ and $B$.
\end{abstract}

\section{Introduction}

In the last few years it has become increasingly apparent that the techniques of \emph{homological stability}, which are most commonly applied to families of groups, can be successfully applied to families of \emph{algebras}, where homology is interpreted as an appropriate $\Tor$ group.
Indeed, the papers of Boyd--Hepworth~\cite{BoydHepworthStability}, Boyd--Hepworth--Patzt~\cite{BHP2021}, Hepworth~\cite{HepworthIH} and Moselle~\cite{Moselle} proved homological stability for Temperley-Lieb algebras, Brauer algebras, and Iwahori-Hecke algebras of types $A$ and $B$ respectively, and identified the stable homology in the first two cases.
The Temperley-Lieb and Brauer algebras failed to satisfy a certain flatness condition that holds automatically for families of groups, necessitating the introduction of the new technique of \emph{inductive resolutions}.
Using related techniques, Sroka~\cite{Sroka} showed that the homology of the Temperley-Lieb algebra on an odd number of strands vanishes in positive degrees, in contrast to the known non-vanishing for an even number of strands.
More recently, Boyde~\cite{Boyde} used a careful study of idempotents to unify and generalise the `invertible parameter' results from~\cite{BoydHepworthStability,BHP2021}, together with Sroka's vanishing result.
In this paper, we prove homological stability for the partition algebras, and we identify their stable homology.

The \emph{partition algebras} were introduced independently by Jones~\cite{Jones_Potts} and Martin~\cite{Martin_JKTR} for their relevance in studying Potts models in statistical mechanics.
They are also important in representation theory as a Schur--Weyl dual to the symmetric group, as in the work of Halverson--Ram~\cite[Theorems 5.4, 3.6]{HalversonRam} and Bowman--Doty--Martin~\cite{BowmanDotyMartin}.
They contain a rich variety of subalgebras, including the planar partition, rook Brauer, rook, planar rook, Brauer, Motzkin and Temperley-Lieb algebras. 

Given a commutative ring $R$, an element $\delta\in R$, and a non-negative integer~$n$, the partition algebra~$\P_n(R,\delta)$ is defined to be the free module over~$R$ with basis given by the partitions of the set~$\{-1,\dots,-n,1,\dots, n\}$.
These partitions can be drawn as diagrams with~$n$ nodes labelled~$-1,\dots, -n$ on the left and~$n$ nodes labelled~$1,\dots, n$ on the right. 
Nodes in the same block of a partition are then joined by edges. For ease of drawing, we do not include all edges but instead rely on transitivity. 
Disconnected nodes are allowed, corresponding to blocks of size one. 
For example, the following diagram shows the basis element $\{\{-1,-3\},\{-2,-4,4\},\{2,3\},\{1\}\}$ of $\P_4(R,\delta)$.
\[
    \begin{tikzpicture}
    \fill (0,0)           circle (.75mm) node[left=2pt](a5) {$-4$};
    \fill (0,\nodeheight) circle (.75mm) node[left=2pt](a4) {$-3$};
    \multiply\nodeheight by 2
    \fill (0,\nodeheight) circle (.75mm) node[left=2pt](a3) {$-2$};
    \divide\nodeheight by 2
    \multiply\nodeheight by 3
    \fill (0,\nodeheight) circle (.75mm) node[left=2pt](a2) {$-1$};
    \divide\nodeheight by 3
    
    \fill (\nodewidth,0) circle (.75mm) node[right=2pt](b5) {$4$};
    \fill (\nodewidth,\nodeheight) circle (.75mm) node[right=2pt](b4) {$3$};
    \multiply\nodeheight by 2
    \fill (\nodewidth,\nodeheight) circle (.75mm) node[right=2pt](b3) {$2$};
    \divide\nodeheight by 2
    \multiply\nodeheight by 3
    \fill (\nodewidth,\nodeheight)  circle (.75mm) node[right=2pt](b2) {$1$};
    \divide\nodeheight by 3
    
    \draw[e] (a2) to[out=0, in=0]   (a4);
    \draw[e] (b3) to[out=180, in=180] (b4);
    \draw[e] (a5) to[out=0, in=0] (a3);
    \draw[e] (a5) to[out=0, in=180] (b5);
\end{tikzpicture}
\]
Multiplication is given by placing the diagrams side by side, identifying the middle nodes, and replacing any blocks not connected to the right or left by a factor of~$\delta$. 

Diagrams in which every node on the left is connected to a single node on the right, and nothing else, are called \emph{permutation diagrams}, and are in bijection with elements of the symmetric group $\fS_n$.
This gives rise to inclusion and projection maps
 \[
        R\fS_n\xrightarrow{\ \iota\ }\P_n(R,\delta)
       \xrightarrow{\ \pi\ }R\fS_n
    \]
where $\iota$ sends permutations to permutation diagrams, and $\pi$ does the reverse while sending all remaining diagrams to $0$.
In particular, $\pi\circ \iota$ is the identity map on $R\fS_n$.

We denote the trivial module of $R\fS_n$ by $\t$. 
Pulling back along $\pi$, we obtain the \emph{trivial module} $\t$ of $\P_n(R,\delta)$.
This gives us the homology groups $H_\ast(\fS_n,\t)=\Tor_\ast^{R\fS_n}(\t,\t)$ of $\fS_n$ and $\Tor_\ast^{\P_n(R,\delta)}(\t,\t)$ of $\P_n(R,\delta)$.
There are induced homomorphisms $\iota_\ast$ and $\pi_\ast$ on homology groups for which  $\pi_\ast\circ\iota_\ast$ is again the identity, so that the homology of $\fS_n$ appears as a direct summand of the homology of $\P_n(R,\delta)$.

\begin{atheorem}\label{thma}
     Suppose that $\delta$ is invertible in $R$.
    Then the homology of the partition algebra is isomorphic to the homology of the symmetric group:
    \[
        \Tor_\ast^{\P_n(R,\delta)}(\t,\t)\cong H_\ast(\fS_n;\t)
    \]
    Indeed, the inclusion and projection maps
    \[
        R\fS_n\xrightarrow{\ \iota\ }\P_n(R,\delta)
       \xrightarrow{\ \pi\ }R\fS_n
    \]
    induce inverse isomorphisms
    \[
        \Tor_\ast^{R\fS_n}(\t,\t)
        \xrightarrow[\cong]{\ \iota_\ast\ }
        \Tor_\ast^{\P_n(R,\delta)}(\t,\t)
        \xrightarrow[\cong]{\ \pi_\ast \ }
        \Tor_\ast^{R\fS_n}(\t,\t).
    \]
\end{atheorem}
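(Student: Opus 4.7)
The plan is to exploit the splitting $\pi \circ \iota = \mathrm{id}_{R\fS_n}$ together with an idempotent-type argument enabled by the invertibility of $\delta$, in the spirit of the authors' earlier treatment of the Brauer algebra and Boyde's subsequent idempotent unification referenced in the introduction.

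First, the identity $\pi \circ \iota = \mathrm{id}$ immediately gives $\pi_* \circ \iota_* = \mathrm{id}$ on $\Tor$, so $\iota_*$ is split injective with retraction $\pi_*$. The theorem therefore reduces to showing that $\iota_*$ is surjective, or equivalently that $\pi_*$ is injective; in particular it suffices to exhibit any natural isomorphism $\Tor_*^{\P_n(R,\delta)}(\mathbbm{1},\mathbbm{1}) \cong \Tor_*^{R\fS_n}(\mathbbm{1},\mathbbm{1})$ compatible with the maps induced by $\iota$ and $\pi$.

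The natural approach is to construct an idempotent $e \in \P_n(R,\delta)$, whose definition requires inverting $\delta$, with the properties: (i) the corner algebra $e \P_n(R,\delta) e$ is canonically identified with $R\fS_n$, extending the map $\iota$; (ii) $e$ acts as the identity on the trivial module, so $e\mathbbm{1} = \mathbbm{1}$; and (iii) $\P_n(R,\delta) \cdot e$ is projective, or at least $\Tor$-acyclic in the relevant sense, as a right $R\fS_n$-module via (i). Granted these three properties, a standard change-of-rings computation gives
\begin{equation*}
\Tor_*^{\P_n(R,\delta)}(\mathbbm{1}, \mathbbm{1})
\;\cong\;
\Tor_*^{e\P_n(R,\delta) e}(e\mathbbm{1}, e\mathbbm{1})
\;=\;
\Tor_*^{R\fS_n}(\mathbbm{1}, \mathbbm{1}),
\end{equation*}
naturally and compatibly with the maps induced by $\iota$ and $\pi$, which is exactly the conclusion of the theorem.

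The main obstacle is property (iii): establishing the projectivity (or acyclicity) of $\P_n(R,\delta) \cdot e$ over $R\fS_n$. The partition algebra carries a far deeper stratification by propagating number than does the Brauer algebra, so verifying this condition requires an inductive analysis along the full filtration. This is where the invertibility of $\delta$ is indispensable, as it allows one to split off each lower-rank stratum as an idempotent summand, and this is presumably where the \emph{inductive resolution} and \emph{high acyclicity} machinery highlighted in the abstract enters the proof.
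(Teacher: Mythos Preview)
Your proposal is not a proof but an outline with the central step left open, and the outline itself diverges from what the paper does in an important way.

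First, the framework you sketch hinges on the existence of a single idempotent $e\in\P_n(R,\delta)$ with $e\P_n e\cong R\fS_n$ via $\iota$. You never construct such an $e$, and it is not clear that one exists. The natural idempotents available when $\delta$ is invertible are the $\delta^{-1}T_i$, and these satisfy $(\delta^{-1}T_i)\P_n(\delta^{-1}T_i)\cong\P_{n-1}$, not $R\fS_n$; iterating them peels off one strand at a time rather than jumping straight to the symmetric group. Boyde's idempotent methods, which you cite, were developed for the Temperley--Lieb and Brauer settings and proceed by exactly this kind of iterated peeling, not via a single corner isomorphism to the group algebra. So property~(i) is already a genuine gap, not just property~(iii). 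Moreover, even granting all three properties, the displayed change-of-rings isomorphism $\Tor_*^{\P_n}(\t,\t)\cong\Tor_*^{e\P_n e}(e\t,e\t)$ is not automatic from (i)--(iii) alone; one needs additional flatness or acyclicity conditions on $Ae$ and $eA$ as bimodules, which you do not address.

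The paper takes a different route. It never constructs such an idempotent. Instead, Theorem~A is the $m=n$ case of the more general \autoref{thm-tor-quotient-Homology-Sm}, and the key input is the inductive-resolution result \autoref{theorem-inductive}: when $\delta$ is invertible, $\Tor_*^{\P_n}(\t,\P_n/J_n)$ vanishes in positive degrees, where $J_n=\ker\pi$ so that $\P_n/J_n\cong R\fS_n$ as a left $\P_n$-module. Combined with the elementary fact that $\P_n/J_n$ is free as a right $R\fS_n$-module (\autoref{lem-quotient-by-Jm-free}) and the identification of \autoref{lem-quotient-by-Jm-tensor-product}, a double-complex / spectral-sequence argument (imported verbatim from~\cite{BHP2021}) yields that $\iota_*$ and $\pi_*$ are inverse isomorphisms. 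The invertibility of $\delta$ enters only in Section~4, through \autoref{A-summand}, which uses the idempotent $\delta^{-1}T_x$ to exhibit $\calA_{X,x}$ as a summand of $\P_n/J_{X-\{x\}}$ during the induction on $|X|$.

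Finally, the \emph{high acyclicity} machinery you invoke is used only for Theorem~B (the stability range for arbitrary $\delta$); it plays no role in the proof of Theorem~A.
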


Our second result holds without any assumptions on the value of $\delta$.

\begin{atheorem}\label{thmb}
The inclusion map $\iota\colon R\mathfrak S_n \to \P_n(R,\delta)$ induces a map in homology
\[ \iota_\ast\colon H_i(\mathfrak S_n;\t) \longrightarrow \Tor^{\P_n(R,\delta)}_i(\t,\t)\]
that is an isomorphism in the range $n \ge 2i+1$.
\end{atheorem}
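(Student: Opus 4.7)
The plan is to follow the same overall template used for the Brauer algebras in \cite{BHP2021}, which itself generalises the classical group-theoretic approach to homological stability: produce an augmented chain complex of $\P_n(R,\delta)$-modules
\[
    \cdots \longrightarrow W_1(n) \longrightarrow W_0(n) \longrightarrow \t \longrightarrow 0
\]
that is exact in a range growing with $n$, together with enough control on each $W_p(n)$ that the resulting hyperhomology spectral sequence computing $\Tor^{\P_n(R,\delta)}_\ast(\t,\t)$ can be compared, term by term, with the analogous spectral sequence for the symmetric groups (built from the complex of injective words). Induction on $n$ then forces the comparison map $\iota_\ast$ to be an isomorphism in the stable range $n\geq 2i+1$.

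To construct $W_\bullet(n)$, I would take the $p$-simplices to be the $\P_n(R,\delta)$-module spanned by partition diagrams in which a prescribed set of nodes on the right-hand side (say $1,\dots,p+1$) is equipped with extra data that forces those nodes to lie in distinct propagating blocks, with face maps given by forgetting the label on a chosen node. The correct definition is essentially dictated by two requirements: first, that the augmentation $W_0(n)\to\t$ be surjective with a manageable kernel, and second, that each $W_p(n)$ admit an identification of the form $\P_n(R,\delta)\otimes_{\P_{n-p-1}(R,\delta)}M_p$ for a module $M_p$ whose homology is accessible. This second point is where the \emph{inductive resolution} technique enters: because the relevant restriction functors are not flat over $\P_{n-p-1}(R,\delta)$, one cannot simply apply Shapiro's lemma, and instead one must replace $M_p$ by a further resolution built inductively from the same family of complexes at smaller indices.

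The key technical ingredient will be a \emph{high acyclicity} statement, asserting that $W_\bullet(n)$ has vanishing homology in degrees up to roughly $\frac{n-1}{2}$. I would prove this by a direct combinatorial argument on the underlying semi-simplicial set of labelled partition diagrams, most likely by exhibiting it as the realization of a poset or complex of ``partial propagations'' and applying a Quillen-style argument (nerve theorem, or a link/join decomposition combined with a discrete Morse matching). This is where the partition case genuinely diverges from the Brauer case: Brauer diagrams propagate via matched pairs, so removing or adjoining a through-strand is a clean operation, whereas in the partition setting propagating blocks may have arbitrary cardinality on either side, so every combinatorial move has to track how a block splits, merges, or is absorbed by singletons. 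I expect this to be the main obstacle, and I anticipate needing a filtration of $W_\bullet(n)$ by ``block size profile'' together with a careful bookkeeping argument to reduce the acyclicity to a more familiar complex, such as the complex of injective words or a variant thereof.

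With the resolution and its high acyclicity in hand, the final step is routine: the double complex $\t\otimes^{\mathbb{L}}_{\P_n(R,\delta)} W_\bullet(n)$ yields two spectral sequences converging to the same target. One collapses (by acyclicity of the augmentation in the stated range) to $\Tor^{\P_n(R,\delta)}_\ast(\t,\t)$; the other has $E^1$-page expressible, via the inductive resolution and Shapiro's lemma, in terms of $\Tor^{\P_{n-p-1}(R,\delta)}_\ast(\t,\t)$ for smaller $n$. Inductively these agree with $H_\ast(\fS_{n-p-1};\t)$, and the resulting $E^1$-page is isomorphic to that of the analogous spectral sequence for the symmetric groups built from the complex of injective words. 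Checking that $\iota$ induces a map of spectral sequences compatible with these identifications, and using the splitting $\pi_\ast\circ\iota_\ast=\id$, then yields the isomorphism claimed in the theorem.
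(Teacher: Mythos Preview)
Your proposal is correct and follows essentially the same route as the paper: the complex is exactly $(C_n)_p = \P_n \otimes_{\P_{n-(p+1)}} \t$ (so your $M_p$ is simply $\t$), the high acyclicity is proved via the filtration-plus-injective-words-with-separators method you anticipate at the end of your acyclicity paragraph (rather than a Quillen or Morse argument), and the Shapiro replacement is obtained from an inductive-resolution argument that, as you correctly predict, is the point at which the partition case diverges most sharply from the Brauer case. The paper then invokes the spectral-sequence comparison verbatim from \cite{BHP2021}.
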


An immediate consequence of \autoref{thmb} is the following corollary.

\begin{acor}
    The partition algebras satisfy homological stability, that is, the inclusion $\P_{n-1}(R,\delta)\hookrightarrow \P_n(R,\delta)$ induces a map
	\[
	\Tor^{\P_{n-1}(R,\delta)}_i(\t,\t)
	\longrightarrow
	\Tor^{\P_{n}(R,\delta)}_i(\t,\t)
	\]
    that is an isomorphism in degrees $n\geq 2i+1$, and this stable range is sharp.
    Furthermore, $\P_n(R,\delta)$ and $\fS_n$ have the same stable homology:
	\[
	\lim_{n\to\infty}H_\ast(\fS_n;\t)
        \cong
	\lim_{n\to\infty}\Tor_\ast^{\P_n(R,\delta)}(\t,\t).
	\]
\end{acor}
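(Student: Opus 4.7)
All three parts of the corollary follow from \autoref{thmb} combined with the classical homological stability of the symmetric groups (due to Nakaoka) via a short diagram chase.

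For the identification of stable homologies, \autoref{thmb} gives natural isomorphisms $\iota_\ast\colon H_i(\fS_n;\t)\xrightarrow{\cong}\Tor^{\P_n(R,\delta)}_i(\t,\t)$ for $n\geq 2i+1$, compatible with the standard inclusions on both sides. Passing to the colimit as $n\to\infty$ yields the required identification.

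For the stability isomorphism, consider the naturality square
\[
\begin{array}{ccc}
H_i(\fS_{n-1};\t) & \xrightarrow{g_n} & H_i(\fS_n;\t)\\
\iota_\ast\big\downarrow & & \big\downarrow\iota_\ast\\
\Tor^{\P_{n-1}(R,\delta)}_i(\t,\t) & \xrightarrow{f_n} & \Tor^{\P_n(R,\delta)}_i(\t,\t).
\end{array}
\]
In the stable range, Nakaoka's theorem gives that $g_n$ is an isomorphism while \autoref{thmb} gives that both vertical maps are isomorphisms, so $f_n$ is an isomorphism as well. The projection $\pi$ is also natural with respect to the inclusions, and the relation $\pi_\ast\iota_\ast=\id$ provides a direct-sum decomposition $\Tor^{\P_n(R,\delta)}_i(\t,\t)\cong \iota_\ast H_i(\fS_n;\t)\oplus\ker(\pi_\ast)$, which lets one track each summand separately through $f_n$ and settle the behaviour at the endpoint of the stability range.

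For sharpness, recall that Nakaoka's stability range is sharp in small degrees for the symmetric groups; since \autoref{thmb} identifies $H_i(\fS_n;\t)$ with $\Tor^{\P_n(R,\delta)}_i(\t,\t)$ compatibly with both inclusions and with the splittings described above, this sharpness transfers directly to the partition algebras.

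The main obstacle is essentially only bookkeeping: \autoref{thmb} encapsulates all the real homological content, and what remains is diagram chasing whose only subtle point is the precise behaviour at the boundary of the stability range, which is cleaned up using the natural splitting provided by $\pi$.
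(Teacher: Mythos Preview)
Your approach is exactly the paper's: combine \autoref{thmb} with Nakaoka's theorem via the naturality square, and read off sharpness and the stable identification directly. The paper's own proof is literally the one sentence ``combine \autoref{thmb} with Nakaoka'', so you have supplied more detail than the original.

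One caveat about your claim that the splitting from $\pi$ ``cleans up'' the endpoint $n=2i+1$. At that value the right-hand $\iota_\ast$ is an isomorphism by \autoref{thmb}, so $\ker(\pi_\ast)$ vanishes on the right; naturality of $\pi$ then forces $f_{2i+1}(\ker\pi_\ast)=0$, and together with the isomorphism on the $\iota_\ast$-summand this shows $f_{2i+1}$ is \emph{surjective}. But injectivity would require $\ker(\pi_\ast)=0$ already in $\Tor_i^{\P_{2i}}$, i.e.\ that $\iota_\ast$ is an isomorphism at $n=2i$, which \autoref{thmb} does not give. So the splitting alone does not settle the endpoint; the paper is equally terse here and does not address it either. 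Your sharpness argument, by contrast, is fine: non-surjectivity of $g_{2i}$ forces non-surjectivity of $f_{2i}$ on the $\iota_\ast$-summand, and since $f_{2i}$ respects the splitting this makes $f_{2i}$ itself non-surjective.
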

	
The first part of this corollary follows by combining \autoref{thmb} with Nakaoka's homological stability result for the symmetric groups, for which the stable range is sharp~(\cite{N}).
For the stable homology, the left hand side of this isomorphism is well known by the Barratt--Priddy--Quillen theorem~\cite{BarrattPriddy,QuillenQ}. 
The above results exactly parallel the situation for the Brauer algebras, and as discussed in \cite{BHP2021} are reminiscent of the relationship between $\fS_n$ and the automorphism groups of free groups $\mathrm{Aut}(F_n)$ (see Galatius \cite{Galatius}).

\subsection{Outline, and comparison to previous work}
In \autoref{sec-partition algebras} we introduce partition algebras and provide the necessary background needed for the rest of the paper. 
In \autoref{sec-inductive-principle} we restate an abstract form of the principle that lies behind the technique of inductive resolutions that was introduced in~\cite{BoydHepworthStability}, and was a crucial ingredient in \cite{BoydHepworthStability} and~\cite{BHP2021}.
In \autoref{sec-inductive resolutions} we establish the existence of inductive resolutions for the partition algebras. These are significantly more complicated than the Temperley-Lieb~\cite{BoydHepworthStability} and Brauer~\cite{BHP2021} cases, and we find that we must consider several families of distinct modules in order to carry out our induction argument. 
In \autoref{sec-replacing shapiros lemma} we follow the argument of~\cite{BHP2021} to replace Shapiro's lemma in the setting of partition algebras. 
The high connectivity result required for any new proof of homological stability is found in \autoref{sec-high connectivity}.
Like our inductive resolutions argument, this is again more complicated than the analogous result in~\cite{BHP2021}, and heavily utilises the high connectivity of the complex of injective words with separators, introduced in that paper. 
We finish in \autoref{sec-proof of thmb} by giving an account of the proof of the main theorem, which follows the same general argument as in \cite{BHP2021}.

It is common, in homological stability for families of groups, to find that proofs of different results have a very similar overall structure, yet the proofs that the relevant complexes are highly acyclic can differ radically.
What we can now see in homological stability for algebras, comparing the work of this paper to that of~\cite{BoydHepworthStability} and~\cite{BHP2021}, is an analogous situation where the overall technique is used in multiple situations, but the details of the acyclicity proofs --- and now also of the inductive resolutions proofs --- are where the important differences and difficulties lie.

\subsection{Acknowledgements}
The first author was supported by EPSRC Postdoctoral Fellowship EP/V043323/1 and EP/V043323/2.
The third author was supported by a Simons Collaboration Grant.
Our thanks to the anonymous referee for their detailed reading and helpful comments.

\section{Partition algebras}\label{sec-partition algebras}

In this section we introduce the partition algebra, together with some specific elements and modules that will be important later in the paper.

\begin{defn}[The partition algebra \cite{Jones_Potts,Martin_JKTR}] 
    As explained in the introduction, if~$R$ is a commutative ring,~$\delta$ is a chosen element in~$R$, and~$n$ is a non-negative integer, then the partition algebra~$\P_n(R,\delta)$ is defined to be the free module over~$R$ with basis given by the partitions of the set~$\{-n,\dots,-1,1,\dots, n\}$.
    These are drawn as diagrams with nodes~$-1,\dots, -n$ on the left and nodes~$1,\dots, n$ on the right, with arcs indicating which nodes lie in the same block of the partition.
    (We allow ourselves to omit some arcs and instead use transitivity to determine the blocks.)
    An example is shown in \autoref{fig:P5ex}. 
    Multiplication is given by placing the diagrams side by side, identifying the middle nodes, and replacing any blocks not connected to the right or left by a factor of~$\delta$, as in~\autoref{fig:MultP5}. 
\end{defn}

We will use the terms `partition' and `diagram' interchangeably to mean a basis element of $\P_n(R,\delta)$, and we will frequently abbreviate $\P_n(R,\delta)$ as $\P_n$ .

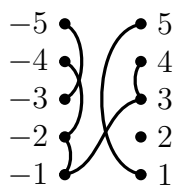
\begin{figure}[h!]
\centering
\begin{tikzpicture}
\fill (0,0)           circle (.75mm) node[left=2pt](a5) {$-5$};
\fill (0,\nodeheight) circle (.75mm) node[left=2pt](a4) {$-4$};
\multiply\nodeheight by 2
\fill (0,\nodeheight) circle (.75mm) node[left=2pt](a3) {$-3$};
\divide\nodeheight by 2
\multiply\nodeheight by 3
\fill (0,\nodeheight) circle (.75mm) node[left=2pt](a2) {$-2$};
\divide\nodeheight by 3
\multiply\nodeheight by 4
\fill (0,\nodeheight) circle (.75mm) node[left=2pt](a1) {$-1$};
\divide\nodeheight by 4

\fill (\nodewidth,0) circle (.75mm) node[right=2pt](b5) {$5$};
\fill (\nodewidth,\nodeheight) circle (.75mm) node[right=2pt](b4) {$4$};
\multiply\nodeheight by 2
\fill (\nodewidth,\nodeheight) circle (.75mm) node[right=2pt](b3) {$3$};
\divide\nodeheight by 2
\multiply\nodeheight by 3
\fill (\nodewidth,\nodeheight)  circle (.75mm) node[right=2pt](b2) {$2$};
\divide\nodeheight by 3
\multiply\nodeheight by 4
\fill (\nodewidth,\nodeheight)  circle (.75mm) node[right=2pt](b1) {$1$};
\divide\nodeheight by 5

\draw[e] (a1) to[out=0, in=180] (b3);
\draw[e] (a1) to[out=0, in=0] (a2);
\draw[e] (a2) to[out=0, in=0]   (a4);
\draw[e] (b3) to[out=180, in=180] (b4);
\draw[e] (a5) to[out=0, in=0] (a3);
\draw[e] (b1) to[out=180,in=180] (b5);

\end{tikzpicture}
\caption{Visualization of the partition $\{\{-5,-3\},\{-4,-2,-1,3,4\},\{1,5\},\{2\}\}$}
\label{fig:P5ex}
\end{figure}

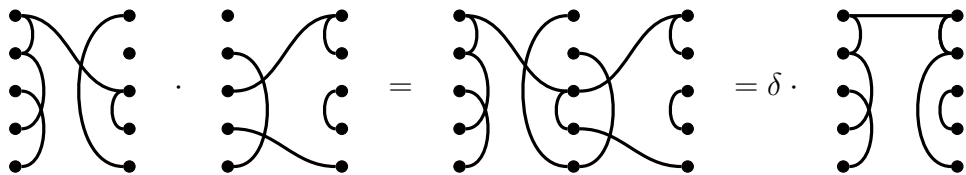
\begin{figure}[h!]
\centering
\[
\begin{tikzpicture}[x=1.5cm,y=-.5cm,baseline=-1.05cm]

\node[v] (a1) at (0,0) {};
\node[v] (a2) at (0,1) {};
\node[v] (a3) at (0,2) {};
\node[v] (a4) at (0,3) {};
\node[v] (a5) at (0,4) {};

\node[v] (b1) at (1,0) {};
\node[v] (b2) at (1,1) {};
\node[v] (b3) at (1,2) {};
\node[v] (b4) at (1,3) {};
\node[v] (b5) at (1,4) {};

\draw[e] (a1) to[out=0, in=180] (b3);
\draw[e] (a1) to[out=0, in=0] (a2);
\draw[e] (a2) to[out=0, in=0]   (a4);
\draw[e] (b3) to[out=180, in=180] (b4);
\draw[e] (a5) to[out=0, in=0] (a3);
\draw[e] (b1) to[out=180,in=180] (b5);

\end{tikzpicture}
\quad
\cdot
\quad
\begin{tikzpicture}[x=1.5cm,y=-.5cm,baseline=-1.05cm]

\node[v] (b1) at (0,0) {};
\node[v] (b2) at (0,1) {};
\node[v] (b3) at (0,2) {};
\node[v] (b4) at (0,3) {};
\node[v] (b5) at (0,4) {};

\node[v] (c1) at (1,0) {};
\node[v] (c2) at (1,1) {};
\node[v] (c3) at (1,2) {};
\node[v] (c4) at (1,3) {};
\node[v] (c5) at (1,4) {};

\draw[e] (b2) to[out=0,in=0] (b5);
\draw[e] (b3) to[out=0,in=180] (c1);
\draw[e] (b4) to[out=0,in=180] (c5);
\draw[e] (c1) to[out=180,in=180] (c2);
\draw[e] (c3) to[out=180,in=180] (c4);

\end{tikzpicture}
\quad
=
\quad
\begin{tikzpicture}[x=1.5cm,y=-.5cm,baseline=-1.05cm]

\node[v] (a1) at (0,0) {};
\node[v] (a2) at (0,1) {};
\node[v] (a3) at (0,2) {};
\node[v] (a4) at (0,3) {};
\node[v] (a5) at (0,4) {};

\node[v] (b1) at (1,0) {};
\node[v] (b2) at (1,1) {};
\node[v] (b3) at (1,2) {};
\node[v] (b4) at (1,3) {};
\node[v] (b5) at (1,4) {};

\node[v] (c1) at (2,0) {};
\node[v] (c2) at (2,1) {};
\node[v] (c3) at (2,2) {};
\node[v] (c4) at (2,3) {};
\node[v] (c5) at (2,4) {};

\draw[e] (a1) to[out=0, in=180] (b3);
\draw[e] (a1) to[out=0, in=0] (a2);
\draw[e] (a2) to[out=0, in=0]   (a4);
\draw[e] (b3) to[out=180, in=180] (b4);
\draw[e] (a5) to[out=0, in=0] (a3);
\draw[e] (b1) to[out=180,in=180] (b5);

\draw[e] (b2) to[out=0,in=0] (b5);
\draw[e] (b3) to[out=0,in=180] (c1);
\draw[e] (b4) to[out=0,in=180] (c5);
\draw[e] (c1) to[out=180,in=180] (c2);
\draw[e] (c3) to[out=180,in=180] (c4);

\end{tikzpicture}
\quad
= \delta \cdot
\quad
\begin{tikzpicture}[x=1.5cm,y=-.5cm,baseline=-1.05cm]

\node[v] (a1) at (0,0) {};
\node[v] (a2) at (0,1) {};
\node[v] (a3) at (0,2) {};
\node[v] (a4) at (0,3) {};
\node[v] (a5) at (0,4) {};

\node[v] (c1) at (1,0) {};
\node[v] (c2) at (1,1) {};
\node[v] (c3) at (1,2) {};
\node[v] (c4) at (1,3) {};
\node[v] (c5) at (1,4) {};

\draw[e] (a1) to[out=0, in=180] (c1);
\draw[e] (a1) to[out=0, in=0] (a2);
\draw[e] (a2) to[out=0, in=0]   (a4);
\draw[e] (a5) to[out=0, in=0] (a3);
\draw[e] (c1) to[out=180,in=180] (c2);
\draw[e] (c1) to[out=180,in=180] (c2);
\draw[e] (c3) to[out=180,in=180] (c4);
\draw[e] (c2) to[out=180,in=180] (c5);
\end{tikzpicture}
\]
\caption{Multiplication in the partition algebra}
\label{fig:MultP5}
\end{figure}

The partition algebra is generated by three types of diagrams \cite{M96}, corresponding to the following partitions:
\begin{itemize}
    \item For~$1\leq i\leq n-1$, $S_i$ is the diagram corresponding to the partition with blocks of pairs~$\{-j,j\}$ for~$j\neq i, i+1$, together with~$\{-(i+1),i\}$ and $\{-i,(i+1)\}$. These generate the group ring of the symmetric group, $\fS_n$, as a subalgebra of~$\P_n$.
    \item For~$1\leq i\neq j\leq n-1$, $V_{ij}$ is the diagram corresponding to the partition with blocks of pairs~$\{-k,k\}$ for~$k\neq i, j$ and one block of size four~$\{-j,-i,i,j\}$.
    \item For~$1\leq i\leq n$, $T_i$ is the diagram corresponding to the partition with blocks of pairs~$\{-j,j\}$ for~$j\neq i$ and two singleton blocks~$\{-i\}$ and~$\{i\}$.
\end{itemize}
See \autoref{fig-Pn-elements} for depictions of some of these.
\begin{figure}[h!]
    \centering
    \[
    \begin{tikzpicture}[x=1.5cm,y=-.5cm,baseline=-1.05cm]
    
    \node[v] (a1) at (0,0) {};
    \node[v] (a2) at (0,1) {};
    \node[v] (a3) at (0,2) {};
    \node[v] (a4) at (0,3) {};
    
    \node[v] (b1) at (1,0) {};
    \node[v] (b2) at (1,1) {};
    \node[v] (b3) at (1,2) {};
    \node[v] (b4) at (1,3) {};
    
    \draw[e] (a1) to[out=0, in=180] (b1);
    \draw[e] (a2) to[out=0, in=180] (b3);
    \draw[e] (a3) to[out=0, in=180]   (b2);
    \draw[e] (a4) to[out=0, in=180] (b4);

    \node at (0.5,4) {$S_2$};
    \end{tikzpicture}
    \qquad \qquad
    \begin{tikzpicture}[x=1.5cm,y=-.5cm,baseline=-1.05cm]
    
    \node[v] (a1) at (0,0) {};
    \node[v] (a2) at (0,1) {};
    \node[v] (a3) at (0,2) {};
    \node[v] (a4) at (0,3) {};
    
    \node[v] (b1) at (1,0) {};
    \node[v] (b2) at (1,1) {};
    \node[v] (b3) at (1,2) {};
    \node[v] (b4) at (1,3) {};
   
    \draw[e] (a1) to[out=0, in=180] (b1);
    \draw[e] (a2) to[out=0, in=180] (b2);
    \draw[e] (a3) to[out=0, in=180] (b3);
    \draw[e] (a4) to[out=0, in=180] (b4);
    \draw[e] (a2) to[out=0, in=0] (a4);
    \draw[e] (b2) to[out=180, in=180] (b4);
    
    \node at (0.5,4) {$V_{13}$};
    \end{tikzpicture}
    \qquad\qquad
    \begin{tikzpicture}[x=1.5cm,y=-.5cm,baseline=-1.05cm]
    
    \node[v] (a1) at (0,0) {};
    \node[v] (a2) at (0,1) {};
    \node[v] (a3) at (0,2) {};
    \node[v] (a4) at (0,3) {};
    
    \node[v] (b1) at (1,0) {};
    \node[v] (b2) at (1,1) {};
    \node[v] (b3) at (1,2) {};
    \node[v] (b4) at (1,3) {};
   
    \draw[e] (a1) to[out=0, in=180] (b1);
    \draw[e] (a3) to[out=0, in=180] (b3);
    \draw[e] (a4) to[out=0, in=180] (b4);
    
    \node at (0.5,4) {$T_3$};
    \end{tikzpicture}
    \]
    \caption{The elements $S_2,V_{13},T_3\in\P_4$}
    \label{fig-Pn-elements}
\end{figure}
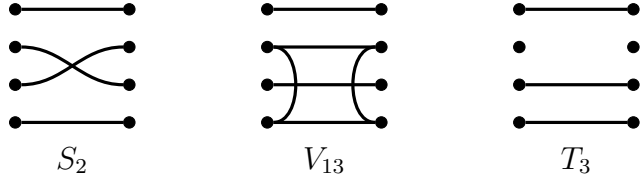

We now introduce the modules we will be working with.

Recall that by a \emph{permutation diagram} we mean a diagram in which each node on the left is joined to a single node on the right, and nothing else.
Equivalently, permutation diagrams are ones that do not contain any singletons on the right \emph{or} any blocks that contain $\geq 2$ elements on the right.

\begin{defn}[The trivial module $\t$]
    For any~$n$, we define the \emph{trivial}~$R\fS_n$-bimodule~$\t$ to be the module given by the ring~$R$, upon which the permutations act as the identity.

    For any~$n$, we define the \emph{trivial}~$\P_n$-bimodule~$\t$ to be the module given by the ring~$R$, upon which the permutation diagrams act as the identity, and all other diagrams act as $0$. This is the same as acting with $\P_n$ on $R$ via the projection $\pi \colon \P_n \to \fS_n$.
\end{defn}

\begin{defn}
    For~$m\leq n$, we can view~$\P_m$ as a subalgebra of~$\P_n$. Given a partition of~$\{-m,\dots,-1,1,\dots ,m\}$,
    the map which sends~$(\pm 1,\dots, \pm m)$ to~$(\pm (n-m+1), \dots , \pm n)$ induces a partition on $\{-n\dots, -(n-m+1), (n-m+1), \dots , n\}$. We add the blocks $\{-i,i\}$ for all $i \in \{1, \dots, (n-m)\}$, resulting in a partition in $\P_n$.
    Pictorially, we are taking diagrams in $\P_m$ and extending them to ones in $\P_n$ by adding new nodes \emph{below} the existing ones, with horizontal connections between the new nodes. 
    Then, under the action of this subalgebra,~$\P_n$ can be viewed as a left $\P_n$-module and a right~$\P_m$-module, and we obtain the induced left $\P_n$-module~$\P_n\otimes_{\P_m}\t$.
\end{defn}

The following proposition is taken from~\cite{PatztRepstab}.

\begin{prop}[{\cite[Proposition 2.5]{PatztRepstab}}]\label{prop:HomBr}
The induced module $\P_n \otimes_{\P_{m}} \t$ is a free $R$-module and a quotient of~$\P_n$. 

In terms of diagrams, a basis for this module is the set of diagrams in which the top $m$ nodes on the right are placed under a box, satisfying the following condition: 
\begin{itemize}
    \item The box is connected to exactly~$m$ distinct blocks.
\end{itemize}
Under this description, the action of $\P_n$ on $\P_n\otimes_{\P_m}\t$ is given by pasting and simplifying the diagrams just as in the multiplication of $\P_n$, and then identifying a diagram with $0$ if it violates the condition above.
\end{prop}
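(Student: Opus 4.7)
The plan is to realise $\P_n \otimes_{\P_m} \t$ as the free $R$-module on box diagrams satisfying the stated condition, via explicit mutually inverse $R$-linear maps. Writing $\varepsilon \colon \P_m \to R$ for the augmentation sending permutation diagrams to $1$ and all other basis diagrams to $0$, the trivial module is $\t = \P_m / I$ with $I = \ker \varepsilon$, so $\P_n \otimes_{\P_m} \t \cong \P_n / \P_n \cdot I$; in particular it is a quotient of $\P_n$. As an $R$-module, $I$ is spanned by the non-permutation basis diagrams of $\P_m$ together with $\{p - \id : p \in \fS_m\}$, and throughout I write $\tilde p \in \P_n$ for the image of $p \in \P_m$ under the embedding.

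Let $M$ be the free $R$-module on the set $B$ of equivalence classes of diagrams in $\P_n$ satisfying the condition, where two diagrams are identified when they differ only by a permutation of the top $m$ right nodes. I define a surjection $\Phi \colon \P_n \to M$ sending a basis diagram $d \in \P_n$ to its class in $B$ when $d$ satisfies the condition, and to $0$ otherwise. The content of the proposition is then to show that $\Phi$ factors through $\P_n \otimes_{\P_m} \t$ and that the induced map is an isomorphism.

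The main step is to verify $\Phi(d \cdot \tilde p) = \varepsilon(p) \Phi(d)$ for every basis diagram $p$ of $\P_m$. When $p \in \fS_m$ the composite $d \cdot \tilde p$ is obtained from $d$ by permuting the top $m$ right nodes, so both sides equal $\Phi(d)$. When $p$ is non-permutation, a short count shows that either some block of $p$ contains two or more right nodes, or some right node of $p$ is a singleton: otherwise each of the $m$ right nodes would lie in its own block together with at least one left node, and the $m$ left nodes of $p$ would be used up one per block, forcing $p$ to be a permutation. In the first case composition places two top right nodes of $d \cdot \tilde p$ in the same block, violating the ``distinct blocks'' requirement; in the second case composition leaves a top right node of $d \cdot \tilde p$ as a singleton, violating the requirement of being connected to a node outside the box. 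Either way $\Phi(d \cdot \tilde p) = 0 = \varepsilon(p)\Phi(d)$.

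To finish I construct a section $\psi \colon M \to \P_n \otimes_{\P_m} \t$ of the induced map by choosing a representative $d_b \in \P_n$ for each class $b \in B$ and setting $\psi(b) = d_b \otimes 1$; this is well-defined because different choices of representative differ by a right-multiplication by an element of $\fS_m$, and it is a right inverse to $\overline\Phi$ by construction. For the other composition it suffices to show $d \otimes 1 = 0$ whenever $d$ violates the condition, which I do via an explicit factorisation $d = d' \cdot \tilde p$ with $p$ non-permutation: when two top right nodes of $d$ share a block, take $p = V_{ij}$ and let $d'$ be $d$ with that block split so only one of the two offending nodes remains; when a top right node of $d$ is a singleton, take $p = T_i$ and let $d'$ reattach the node to another block (for example, to its partner left node). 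The action of $\P_n$ on $M$ given in the statement then follows by reading off left composition and discarding composites that violate the condition. I expect the main obstacle to lie in the combinatorial bookkeeping of this dual case analysis, particularly in producing the factorisations $d = d' \cdot \tilde p$ cleanly enough to verify that the composite really recovers $d$ without any spurious factors of $\delta$.
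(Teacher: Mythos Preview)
The paper does not prove this proposition; it is quoted from \cite[Proposition~2.5]{PatztRepstab} and used as a black box.  So there is no ``paper's own proof'' to compare against here.  That said, your argument is correct and is in the same spirit as the paper's later Lemma~\ref{lem-quotient-by-Jm-tensor-product}, which establishes the closely related isomorphism $\P_n/J_m\otimes_{R\fS_m}\t\cong \P_n\otimes_{\P_m}\t$ using exactly the factorisation-through-a-non-permutation-diagram idea that you use.

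Two small comments.  First, in your $V_{ij}$ case you do not need to split the block: if the top right nodes $i,j$ already lie in the same block of $d$, then $d\cdot \tilde V_{ij}=d$ on the nose (since $V_{ij}$ is idempotent and merely merges $i$ and $j$), so you may take $d'=d$.  Second, in the $T_i$ case your concern about spurious factors of $\delta$ is well placed but easy to dispatch: if you reattach the singleton $i$ to any other block $C$ of $d$ to form $d'$, then in $d'\cdot\tilde T_i$ the only middle node that fails to propagate to the right is $i$, and the block containing it still meets the left or right via some element of $C$, so no component is trapped in the middle.  With these tweaks the argument is clean.
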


Thus there are two ways that a diagram could be identified with $0$ after left multiplication by a diagram in~$\P_n$: 
One of the blocks attached to the box could, after pasting, consist only of nodes in the centre (visually, that block is free to be retracted into the box, and then disappears).
Alternatively, two or more distinct blocks that were attached to the box can become fused into a single block (visually, there is now a path of arcs with both ends attached to the box).
These two possibilities correspond to the two ways in which a diagram in $\P_m$ can fail to be a permutation diagram, and therefore act as $0$ on $\t$: It can have a singleton on the right, or it can have two nodes on the right belonging to the same block.

\begin{ex}
    Figure~\ref{fig:MultInd2} depicts the module structure of $\P_5\otimes_{\P_3}\t$.
    In the first example one of the blocks connected to the box consists entirely of nodes in the centre and therefore `vanishes' or `retracts into the box'.
    In the second example the factor of $\delta$ arises due to a block that consists entirely of nodes in the centre and is not attached to the box.
\end{ex}

\begin{figure}[h!]
\centering

\[
\begin{tikzpicture}[x=1.5cm,y=-.5cm,baseline=-1.05cm]

\node[v] (a1) at (0,0) {};
\node[v] (a2) at (0,1) {};
\node[v] (a3) at (0,2) {};
\node[v] (a4) at (0,3) {};
\node[v] (a5) at (0,4) {};

\node[v] (b1) at (1,0) {};
\node[v] (b2) at (1,1) {};
\node[v] (b3) at (1,2) {};
\node[v] (b4) at (1,3) {};
\node[v] (b5) at (1,4) {};

\draw[e] (a1) to[out=0, in=180] (b3);
\draw[e] (a1) to[out=0, in=0] (a2);
\draw[e] (a2) to[out=0, in=0]   (a4);
\draw[e] (b3) to[out=180, in=180] (b4);
\draw[e] (a5) to[out=0, in=0] (a3);
\draw[e] (b1) to[out=180,in=180] (b2);

\end{tikzpicture}
\quad
\cdot
\quad
\begin{tikzpicture}[x=1.5cm,y=-.5cm,baseline=-1.05cm]

\node[v] (b1) at (0,0) {};
\node[v] (b2) at (0,1) {};
\node[v] (b3) at (0,2) {};
\node[v] (b4) at (0,3) {};
\node[v] (b5) at (0,4) {};

\node[v] (c1) at (1,0) {};
\node[v] (c2) at (1,1) {};
\node[v] (c3) at (1,2) {};
\node[v] (c4) at (1,3) {};
\node[v] (c5) at (1,4) {};

\draw[e] (b2) to[out=0,in=0] (b5);
\draw[e] (b1) to[out=0,in=180] (c1);
\draw[e] (b4) to[out=0, in=180] (c4);
\draw[e] (b3) to[out=0, in=180] (c2);
\draw[e] (c5) to[out=180,in=180] (c3);

\draw[e,fill=white] (0.9,-0.2) rectangle (1.1,2.2);
\node at (c2) {$3$};
\end{tikzpicture}
\quad
=
\quad
\begin{tikzpicture}[x=1.5cm,y=-.5cm,baseline=-1.05cm]

\node[v] (a1) at (0,0) {};
\node[v] (a2) at (0,1) {};
\node[v] (a3) at (0,2) {};
\node[v] (a4) at (0,3) {};
\node[v] (a5) at (0,4) {};

\node[v] (b1) at (1,0) {};
\node[v] (b2) at (1,1) {};
\node[v] (b3) at (1,2) {};
\node[v] (b4) at (1,3) {};
\node[v] (b5) at (1,4) {};

\node[v] (c1) at (2,0) {};
\node[v] (c2) at (2,1) {};
\node[v] (c3) at (2,2) {};
\node[v] (c4) at (2,3) {};
\node[v] (c5) at (2,4) {};

\draw[e] (a1) to[out=0, in=180] (b3);
\draw[e] (a1) to[out=0, in=0] (a2);
\draw[e] (a2) to[out=0, in=0]   (a4);
\draw[e] (b3) to[out=180, in=180] (b4);
\draw[e] (a5) to[out=0, in=0] (a3);
\draw[e] (b1) to[out=180,in=180] (b2);

\draw[e] (b2) to[out=0,in=0] (b5);

\draw[e] (b4) to[out=0, in=180] (c4);
\draw[e] (b1) to[out=0,in=180] (c1);
\draw[e] (c5) to[out=180,in=180] (c3);
\draw[e] (b3) to[out=0, in=180] (c2);

\draw[e,fill=white] (1.9,-0.2) rectangle (2.1,2.2);
\node at (c2) {$3$};

\end{tikzpicture}
\quad
=  
\quad
\begin{tikzpicture}[x=1.5cm,y=-.5cm,baseline=-1.05cm]

\node[v] (a1) at (0,0) {};
\node[v] (a2) at (0,1) {};
\node[v] (a3) at (0,2) {};
\node[v] (a4) at (0,3) {};
\node[v] (a5) at (0,4) {};

\node[v] (b1) at (1,0) {};
\node[v] (b2) at (1,1) {};
\node[v] (b3) at (1,2) {};
\node[v] (b4) at (1,3) {};
\node[v] (b5) at (1,4) {};

\draw[e] (a1) to[out=0, in=180] (b4);
\draw[e] (a1) to[out=0,in=180] (b2);
\draw[e] (a1) to[out=0, in=0] (a2);
\draw[e] (a2) to[out=0, in=0]   (a4);
\draw[e] (a5) to[out=0, in=0] (a3);
\draw[e] (b5) to[out=180,in=180] (b3);

\draw[e,fill=white] (0.9,-0.2) rectangle (1.1,2.2);
\node at (b2) {$3$};

\end{tikzpicture}
=0
\]
\vspace{1cm}
\[
\begin{tikzpicture}[x=1.5cm,y=-.5cm,baseline=-1.05cm]

\node[v] (a1) at (0,0) {};
\node[v] (a2) at (0,1) {};
\node[v] (a3) at (0,2) {};
\node[v] (a4) at (0,3) {};
\node[v] (a5) at (0,4) {};

\node[v] (b1) at (1,0) {};
\node[v] (b2) at (1,1) {};
\node[v] (b3) at (1,2) {};
\node[v] (b4) at (1,3) {};
\node[v] (b5) at (1,4) {};

\draw[e] (a1) to[out=0, in=180] (b3);
\draw[e] (a1) to[out=0, in=0] (a2);
\draw[e] (a2) to[out=0, in=0]   (a3);
\draw[e] (b3) to[out=180, in=180] (b4);
\draw[e] (a5) to[out=0, in=0] (a4);
\draw[e] (b2) to[out=180,in=180] (b5);
\draw[e] (a4) to[out=0, in=180] (b1);

\end{tikzpicture}
\quad
\cdot
\quad
\begin{tikzpicture}[x=1.5cm,y=-.5cm,baseline=-1.05cm]

\node[v] (b1) at (0,0) {};
\node[v] (b2) at (0,1) {};
\node[v] (b3) at (0,2) {};
\node[v] (b4) at (0,3) {};
\node[v] (b5) at (0,4) {};

\node[v] (c1) at (1,0) {};
\node[v] (c2) at (1,1) {};
\node[v] (c3) at (1,2) {};
\node[v] (c4) at (1,3) {};
\node[v] (c5) at (1,4) {};

\draw[e] (b2) to[out=0,in=0] (b5);
\draw[e] (b1) to[out=0,in=180] (c1);
\draw[e] (b4) to[out=0, in=180] (c4);
\draw[e] (b3) to[out=0, in=180] (c2);
\draw[e] (c5) to[out=180,in=180] (c3);

\draw[e,fill=white] (0.9,-0.2) rectangle (1.1,2.2);
\node at (c2) {$3$};
\end{tikzpicture}
\quad
=
\quad
\begin{tikzpicture}[x=1.5cm,y=-.5cm,baseline=-1.05cm]

\node[v] (a1) at (0,0) {};
\node[v] (a2) at (0,1) {};
\node[v] (a3) at (0,2) {};
\node[v] (a4) at (0,3) {};
\node[v] (a5) at (0,4) {};

\node[v] (b1) at (1,0) {};
\node[v] (b2) at (1,1) {};
\node[v] (b3) at (1,2) {};
\node[v] (b4) at (1,3) {};
\node[v] (b5) at (1,4) {};

\node[v] (c1) at (2,0) {};
\node[v] (c2) at (2,1) {};
\node[v] (c3) at (2,2) {};
\node[v] (c4) at (2,3) {};
\node[v] (c5) at (2,4) {};

\draw[e] (a1) to[out=0, in=180] (b3);
\draw[e] (a1) to[out=0, in=0] (a2);
\draw[e] (a2) to[out=0, in=0]   (a3);
\draw[e] (b3) to[out=180, in=180] (b4);
\draw[e] (a5) to[out=0, in=0] (a4);
\draw[e] (b2) to[out=180,in=180] (b5);
\draw[e] (a4) to[out=0, in=180] (b1);

\draw[e] (b2) to[out=0,in=0] (b5);

\draw[e] (b4) to[out=0, in=180] (c4);
\draw[e] (b1) to[out=0,in=180] (c1);
\draw[e] (c5) to[out=180,in=180] (c3);
\draw[e] (b3) to[out=0, in=180] (c2);

\draw[e,fill=white] (1.9,-0.2) rectangle (2.1,2.2);
\node at (c2) {$3$};

\end{tikzpicture}
\quad
=  
\quad
\delta\cdot
\begin{tikzpicture}[x=1.5cm,y=-.5cm,baseline=-1.05cm]

\node[v] (a1) at (0,0) {};
\node[v] (a2) at (0,1) {};
\node[v] (a3) at (0,2) {};
\node[v] (a4) at (0,3) {};
\node[v] (a5) at (0,4) {};

\node[v] (b1) at (1,0) {};
\node[v] (b2) at (1,1) {};
\node[v] (b3) at (1,2) {};
\node[v] (b4) at (1,3) {};
\node[v] (b5) at (1,4) {};

\draw[e] (a1) to[out=0, in=180] (b4);
\draw[e] (a1) to[out=0,in=180] (b2);
\draw[e] (a1) to[out=0, in=0] (a2);
\draw[e] (a2) to[out=0, in=0]   (a3);
\draw[e] (a5) to[out=0, in=0] (a4);
\draw[e] (b5) to[out=180,in=180] (b3);
\draw[e] (a4) to[out=0,in=180] (b1);

\draw[e,fill=white] (0.9,-0.2) rectangle (1.1,2.2);
\node at (b2) {$3$};

\end{tikzpicture}
\]
\caption{The module structure of $\P_5\otimes_{\P_3}\t$}
\label{fig:MultInd2}
\end{figure}
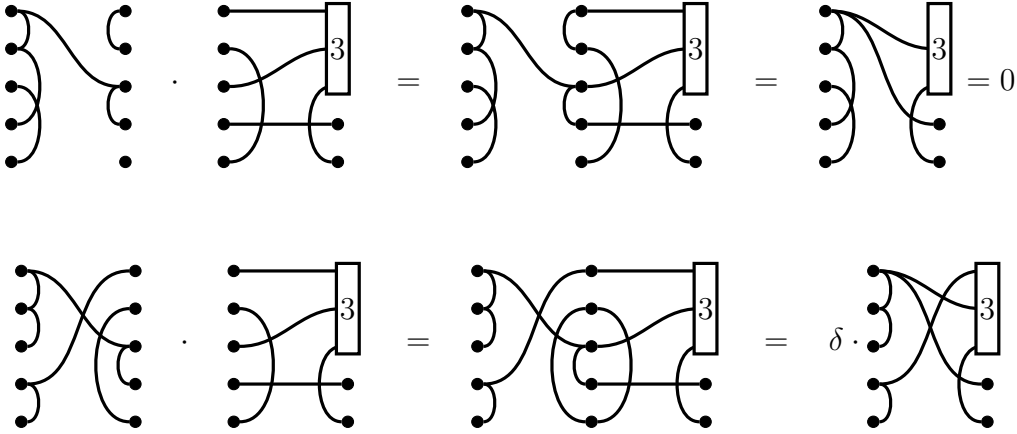

\section{The principle of inductive resolutions}\label{sec-inductive-principle}

In this brief section we state an abstract form of the principle that underlies the technique of inductive resolutions that appeared in~\cite{BoydHepworthStability} and~\cite{BHP2021}.
It allows us to identify modules that vanish under a fixed functor of the form $\Tor_i^A(M,-)$ by resolving them using modules that already have this property, hence the name `inductive resolutions'.
The theorem below is an abstraction of Section 3.3 of~\cite{BoydHepworthStability}.
It can be regarded as an application of the general principle that a derived functor can be computed using resolutions by objects that are acyclic for that derived functor.

\begin{thm}\label{inductive-resolution-method}
    Let $A$ be an algebra over a ring $R$, and let $M$ be a right $A$-module.
    Suppose that $N$ is a left $A$-module equipped with a resolution $Q_\ast\to N$ with the following two properties:
    \begin{itemize}
        \item
        $\Tor^A_\ast(M,Q_j)$ vanishes in positive degrees for all $j\geqslant 0$.
        \item
        $M\otimes_A Q_\ast\to M\otimes_A N$ is a resolution. 
    \end{itemize}
    Then $\Tor^A_\ast(M,N)$ vanishes in positive degrees.
\end{thm}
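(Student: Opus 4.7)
The plan is to use a standard double-complex argument, exploiting the fact that derived functors can be computed from resolutions by acyclic objects. Concretely, I would choose a projective resolution $P_\ast \to M$ of the right $A$-module $M$ and form the first-quadrant double complex
\[
C_{\ast,\ast} = P_\ast \otimes_A Q_\ast,
\]
with the usual horizontal and vertical differentials coming from $P_\ast$ and $Q_\ast$. The two spectral sequences of this bicomplex both converge to $H_\ast(\Tot(C_{\ast,\ast}))$, and the strategy is to compute this total homology in two different ways, then compare.

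First I would filter by columns, so that on the first page we take homology in the $P$-direction. For each fixed $q$, this computes $\Tor^A_p(M, Q_q)$, which by the first hypothesis vanishes for $p>0$ and equals $M \otimes_A Q_q$ for $p=0$. The surviving row is then $M \otimes_A Q_\ast$, whose homology, by the second hypothesis, is concentrated in degree $0$ and equal to $M \otimes_A N$ there. So this spectral sequence collapses and gives
\[
H_i(\Tot(C_{\ast,\ast})) \;\cong\; \begin{cases} M \otimes_A N & i=0, \\ 0 & i > 0. \end{cases}
\]

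Second I would filter by rows. Since each $P_p$ is projective and hence flat, tensoring with $P_p$ preserves the exactness of the resolution $Q_\ast \to N$, so the first page is $P_p \otimes_A N$ concentrated in $q=0$. Taking the remaining differential in the $P$-direction gives $\Tor^A_p(M, N)$ on the bottom row of the second page, and the spectral sequence degenerates there. Since both spectral sequences abut to the same graded object, comparing the two calculations yields $\Tor^A_i(M, N) = 0$ for $i > 0$, as desired.

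I do not anticipate a genuine obstacle here: because $P_\ast$ and $Q_\ast$ can be taken concentrated in non-negative degrees, the bicomplex is first-quadrant and both spectral sequences converge without subtlety. The only content of the argument is the observation that the two hypotheses precisely say that the $Q_j$ are acyclic for $M \otimes_A (-)$ and that the resulting complex $M \otimes_A Q_\ast$ still computes $M \otimes_A N$, which together suffice to replace a projective resolution of $N$ by $Q_\ast$ for the purpose of computing $\Tor^A_\ast(M, N)$.
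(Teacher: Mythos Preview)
Your proposal is correct and is essentially identical to the paper's own proof: both form the double complex $P_\ast\otimes_A Q_\ast$ from a projective resolution $P_\ast\to M$, and compare its two spectral sequences. The only cosmetic difference is that you run the two spectral sequences in the opposite order to the paper.
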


\begin{proof}
    Let $P_\ast\to M$ be a projective resolution, so that for any left $A$-module $B$, the groups $\Tor_\ast^A(M,B)$ are computed by the complex $P_\ast\otimes_A B$.
    Consider the double complex $P_\ast\otimes_A Q_\ast$.
    There are two natural spectral sequences converging to the homology of the totalisation $\Tot(P_\ast\otimes_A Q_\ast)$.
    For more on these spectral sequences, see Section~5.6 of~\cite{Weibel} and the summary in Section~3.2 of~\cite{BoydHepworthStability}.
    
    The first spectral sequence has $E^1$-term given by 
    $$
        \IE^1_{i,j} 
        = H_j(P_i\otimes_A Q_\ast)
        \cong P_i\otimes_A H_j(Q_\ast)
    $$
    with~$d^1$ induced by the differential of $P$. 
    The isomorphism holds because each~$P_i$ is projective and therefore flat.
    It follows that the~$E^2$-term is
    \[
        \IE^2_{i,j} = \Tor_i^A(M,H_j(Q_\ast)).
    \]
    Since $Q_\ast$ is a resolution of $N$, it follows that $\IE^2_{\ast,\ast}$ is simply $\Tor_\ast^A(M,N)$ concentrated on the $x$-axis, so that the same is true of $\IE^\infty_{\ast,\ast}$, and therefore we conclude that $H_\ast(\Tot(P_\ast\otimes_A Q_\ast))\cong\Tor_\ast^A(M,N)$.
    
    The second spectral sequence has~$E^1$-term given by~$\IIE^1_{i,j} = H_j(P_\ast\otimes_AQ_i)$, \emph{i.e.}
    \[
        \IIE^1_{i,j} =  \Tor^A_j(M,Q_i)
    \]
    with~$d^1$ induced by the boundary maps of~$Q_\ast$.
    Our first assumption now shows that $\IIE^1_{\ast,\ast}$ is concentrated on the $x$-axis, where it is given by $\Tor^A_0(M,Q_\ast) = M\otimes_A Q_\ast$.
    Consequently $\IIE^2_{\ast,\ast}$ is given by the homology of $M\otimes_A Q_\ast$, which by our second assumption is simply a copy of $M\otimes_A N$ at the origin.
    This shows that the homology of $\Tot(P_\ast\otimes_A Q_\ast)$ is simply a copy of $M\otimes_A N$ in degree $0$.

    Comparing the outcomes of the two spectral sequences, we see that $\Tor_\ast^A(M,N)$ vanishes in positive degrees, as required. 
\end{proof}

\section{Inductive resolutions}\label{sec-inductive resolutions}

In this section, we will use the technique of \emph{inductive resolutions}, which originated in~\cite{BoydHepworthStability} and was further used in~\cite{BHP2021}.

\begin{Def}\label{Def:Jx}
    Suppose that $X$ is a subset of the set $\{1,\dots,n\}$.
    Define $J_X$ to be the left-ideal in $\P_n$ that is the $R$-span of all diagrams in which, among the nodes on the right labelled by elements of $X$, there is at least one singleton or one pair of nodes that are in the same block. For $m\le n$, let $J_{\{n-m+1,\dots,n\}}$ be denoted by $J_m$.
\end{Def}

Observe that $J_n=J_{\{1,\dots,n\}}$ is the span of precisely the diagrams that are \emph{not} permutation diagrams. 
It is therefore the kernel of the projection map $\pi\colon\P_n\to R\fS_n$.

Our aim in this section is to prove the following theorem, which will be used in the final section to understand the $\Tor$ groups $\Tor^{\P_n}_\ast(\t,\P_n\otimes_{\P_m}\t)$.

\begin{thm}\label{theorem-inductive}
    Let $X\subseteq\{1,\dots,n\}$ and suppose that one of the following conditions holds:
    \begin{itemize}
        \item
        $|X|\leq n$ and $\delta$ is invertible in $R$.
        \item
        $|X|<n$.
    \end{itemize}
    Then the groups $\Tor_\ast^{\P_n}(\t,\P_n/J_X)$ vanish in positive degrees.
\end{thm}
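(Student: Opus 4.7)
The plan is to proceed by induction on $|X|$, applying the inductive resolution principle (\autoref{inductive-resolution-method}) at each step with $A = \P_n$, $M = \t$, and $N = \P_n/J_X$. The base case $|X| = 0$ gives $J_X = 0$, so $\P_n/J_X = \P_n$ is free as a left $\P_n$-module and its positive $\Tor$ groups vanish trivially.

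For the inductive step, the plan is to build a resolution $Q_\bullet \to \P_n/J_X$ in which each $Q_j$ is a module already known to have vanishing positive $\Tor^{\P_n}_\ast(\t,-)$, and such that $\t \otimes_{\P_n} Q_\bullet$ remains exact. The starting point is to pick $k \in X$, set $X' = X \setminus \{k\}$, and use the short exact sequence
\[
0 \to J_X/J_{X'} \to \P_n/J_{X'} \to \P_n/J_X \to 0.
\]
The middle term $\P_n/J_{X'}$ is controlled by the inductive hypothesis since $|X'| < |X|$, reducing the question to an analysis of $J_X/J_{X'}$. This subquotient is spanned by diagrams whose only badness at the $X$-nodes involves $k$, and it splits naturally according to the two failure modes at $k$: either $k$ is a singleton on the right (captured by the element $T_k$, which satisfies the quasi-idempotent relation $T_k^2 = \delta T_k$), or $k$ shares a block with some $j \in X'$ (captured by $V_{kj}$, which is a genuine idempotent with $V_{kj}^2 = V_{kj}$). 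Using these elements together with the fact that $\P_n T_k$ already accounts for the singleton part while $\P_n V_{kj}$ accounts for the pairing parts, one should express each piece of $J_X/J_{X'}$ as induced from a $\P_{n-1}$-module of the form $\P_{n-1}/J_Y$ for a strictly smaller $Y$, so that an outer induction on $n$ closes the loop.

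The step I expect to be the main obstacle, as the introduction to this section itself anticipates, is assembling these pieces into a single resolution $Q_\bullet$ and then verifying that $\t \otimes_{\P_n} Q_\bullet$ remains exact. The parameter $\delta$ enters precisely through the relation $T_k^2 = \delta T_k$: splicing resolution pieces together or applying $\t \otimes_{\P_n} -$ creates closed loops built from pairs of $T_k$'s that contribute spurious factors of $\delta$. When $|X| < n$, a right-node outside $X$ is available to absorb these factors into a block that extends out of $X$, and no assumption on $\delta$ is needed. When $|X| = n$, no such spare node exists and inverting $\delta$ becomes the only way to cancel these rogue factors, which accounts for the two distinct hypotheses in the theorem. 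Managing the combinatorial bookkeeping --- tracking which diagrams survive each quotient, which singletons contribute loops, and whether tensoring with $\t$ preserves exactness at every stage --- is where the real technical labour lies, and is the principal point at which the partition-algebra argument diverges in substance from the Brauer-algebra argument of~\cite{BHP2021}.
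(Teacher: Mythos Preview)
Your overall shape is right and matches the paper: induct on $|X|$, use the surjection $\P_n/J_{X'}\to\P_n/J_X$ with $X'=X\setminus\{k\}$, and split the kernel $J_X/J_{X'}$ into a ``singleton-at-$k$'' piece and a ``$k$-shares-a-block-with-$X'$'' piece. The paper names these $\calA_{X,k}$ and $\calB_{X,k}$ and packages the splitting as a two-step resolution (\autoref{A-B-resolution}).

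The genuine gap is your proposed mechanism for handling the two pieces. You suggest expressing each as induced from a $\P_{n-1}$-module $\P_{n-1}/J_Y$ and closing with an outer induction on $n$. This does not work: there is no Shapiro-type isomorphism relating $\Tor^{\P_n}_\ast(\t,\P_n\otimes_{\P_{n-1}}N)$ to $\Tor^{\P_{n-1}}_\ast(\t,N)$ in this setting --- indeed the failure of such a statement is precisely why the whole inductive-resolution machinery exists (see \autoref{sec-replacing shapiros lemma}). So even if you realised the pieces as induced modules, knowing their $\Tor$ over $\P_{n-1}$ would tell you nothing about their $\Tor$ over $\P_n$.

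What the paper actually does stays entirely inside $\P_n$ and uses only \emph{strong} induction on $|X|$. For the pairing piece $\calB_{X,k}$, it builds a resolution $D(X,k)$ whose terms are the modules $\calM_{X,Y}$, each a direct summand of $\P_n/J_{X\setminus Y}$ via the idempotent $V_Y$; since $|X\setminus Y|<|X|$, the strong inductive hypothesis applies. For the singleton piece $\calA_{X,k}$ the two hypotheses bifurcate exactly where you predicted: if $\delta$ is invertible then $\delta^{-1}T_k$ is idempotent and $\calA_{X,k}$ is a summand of $\P_n/J_{X'}$; if instead $|X|<n$, one picks $y\notin X$ and uses the genuine idempotent $T_kV_{ky}$ (note $T_kV_{ky}T_k=T_k$) to build a periodic resolution $C(X,k,y)$ whose terms are all $\P_n/J_{X'}$. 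Your intuition that a node outside $X$ ``absorbs'' the $\delta$-defect is correct, but the concrete realisation is this idempotent $T_kV_{ky}$, not an induction from a smaller algebra.
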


The proof of \autoref{theorem-inductive} will occupy the rest of the section.
Aspects of the material are close to \cite[Section~3]{BoydHepworthStability} and \cite[Section~3]{BHP2021}, but overall the material here is significantly more complex.

Before we continue, we record the following lemma,
which extends \autoref{theorem-inductive} to degree $0$. 
We will need this lemma to prove the theorem.

\begin{lem}\label{tensor-P/J}
    Let $J$ be a left ideal of $\P_n$ that is included in $J_n$. Then
    \[ \t \otimes_{\P_n} \P_n/J \cong \t.\]
    In particular,
    \[ \Tor^{\P_n}_0(\t,\P_n/J_X) \cong \t\]
    for all $X\subset \{1, \dots, n\}$.
\end{lem}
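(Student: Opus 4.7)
The plan is to prove this by a direct application of the right-exactness of the tensor product, using that the trivial module kills all non-permutation diagrams.

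I would start from the short exact sequence of left $\P_n$-modules
\[
    0 \longrightarrow J \longrightarrow \P_n \longrightarrow \P_n/J \longrightarrow 0,
\]
and tensor it with $\t$ over $\P_n$ on the left. This gives the right-exact sequence
\[
    \t \otimes_{\P_n} J \longrightarrow \t \otimes_{\P_n} \P_n \longrightarrow \t \otimes_{\P_n} \P_n/J \longrightarrow 0.
\]
I would then identify the middle term with $\t$ via the standard isomorphism $t \otimes x \mapsto t\cdot x$. Under this identification, the first map sends $t\otimes j$ to $t\cdot j$. Since $J \subseteq J_n$ and $J_n$ consists by definition of the span of non-permutation diagrams, which by definition of the trivial module act as zero on $\t$, this map is identically zero. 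Consequently the cokernel is all of $\t$, which gives $\t \otimes_{\P_n} \P_n/J \cong \t$.

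For the second statement, it only remains to observe that $J_X \subseteq J_n$ for every $X \subseteq \{1,\dots,n\}$: any diagram with a singleton on a right node in $X$, or with two right nodes in $X$ lying in the same block, automatically fails to be a permutation diagram, so lies in $J_n$. Applying the first statement with $J = J_X$ yields the conclusion. There is no real obstacle here; the only point that might warrant brief mention is the sanity check that $J_X$ is genuinely a left ideal (pasting a diagram on the left of a diagram $e\in J_X$ can only merge blocks at the right, so singletons on $X$-nodes remain singletons and pairs of $X$-nodes in a common block stay in a common block), but as this is implicit in \autoref{Def:Jx} it need not be belabored.
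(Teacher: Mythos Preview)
Your proof is correct and essentially the same as the paper's: both arguments rest on right-exactness of $\t\otimes_{\P_n}-$ together with the fact that $J_n$ acts as zero on $\t$. The only cosmetic difference is that the paper sandwiches $\P_n/J$ between $\P_n$ and $\P_n/J_n$ (getting surjections $\t\twoheadrightarrow \t\otimes_{\P_n}\P_n/J\twoheadrightarrow \t$ whose composite is an isomorphism), whereas you apply right-exactness to $0\to J\to\P_n\to\P_n/J\to 0$ and observe directly that the image of $\t\otimes_{\P_n}J$ in $\t$ is zero.
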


\begin{proof}
    Due to the inclusions $0 \subset J \subset J_n$, we have the surjections
    \[ \t \otimes_{\P_n} \P_n \twoheadlongrightarrow \t \otimes_{\P_n} \P_n/J \twoheadlongrightarrow \t \otimes_{\P_n} \P_n/J_n.\]
    Because
    \[\t \otimes_{\P_n} \P_n \cong \t \]
    and 
    \[\t \otimes_{\P_n} \P_n/J_n \cong \t \otimes_{\P_n} R\fS_n \cong \t \otimes_{R\fS_n} R\fS_n \cong \t,\]
    the above composition is an isomorphism and the first map must be also injective.
\end{proof}

\subsection{Reducing to $\calA_{X,x}$ and $\calB_{X,x}$}

Our proof of \autoref{theorem-inductive} will be by induction on the cardinality of $X$. 
Ideally we would prove the inductive step by resolving $\P_n/J_X$ in terms of modules $\P_n/J_{X'}$ with $|X'|<|X|$.
However, we were not able to find a straightforward argument along these lines. 
To organise the argument, in this subsection we introduce some intermediate modules, and later on we will build our resolutions with these.

\begin{defn}
    Let $Y\subseteq X\subseteq\{1,\dots,n\}$, and let $x\in X$ and $y\in Y$.
    We define three left $\P_n$-submodules of $\P_n$ as follows:
    \begin{itemize}
        \item
        $A_x$ is the span of all diagrams in which $x$ is a singleton.
        \item
        $B_{X,x}$ is the span of all diagrams in which $x$ lies in the same block as some other element of $X$.
        \item
        $M_Y$ is the span of all diagrams in which the elements of $Y$ lie in the same block.
    \end{itemize}
    We define quotients of these as follows:
    \begin{gather*}
        \calA_{X,x}=\frac{A_x}{A_x\cap J_{X-\{x\}}},
        \qquad
        \calB_{X,x} = \frac{B_{X,x}}{B_{X,x}\cap J_{X-\{x\}}},
        \qquad
        \calM_{X,Y}=\frac{M_Y}{M_Y\cap J_{X-Y}}
    \end{gather*}
\end{defn}

The following result will be useful when we come to verify the second condition of \autoref{inductive-resolution-method}.

\begin{lem}\label{modules-lemma}
    The modules $\calA_{X,x}$, $\calB_{X,x}$, and $\calM_{X,Y}$ behave as follows under tensor product with $\t$.
    \begin{itemize}
        \item
        Let $x\in X\subseteq\{1,\dots,n\}$, and suppose that $n\geq 2$.
        Then $\t\otimes_{\P_n}\calA_{X,x}=0$.
        \item
        Let $x\in X\subseteq\{1,\dots,n\}$.
        Then $\t\otimes_{\P_n}\calB_{X,x}=0$.
        \item
        Let $Y\subseteq X\subseteq\{1,\dots,n\}$ with $|Y|\geq 2$.
        Then $\t\otimes_{\P_n}\calM_{X,Y}=0$ and $\calM_{X,Y}$ is a direct summand of $\P_n/J_{X-Y}$.
    \end{itemize}
\end{lem}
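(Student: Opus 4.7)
My strategy is to prove all three parts by realizing each module as the image of an idempotent in $\P_n$ that lies in $J_n$; the resulting direct-summand structure, combined with the fact that elements of $J_n$ act trivially on $\t$, forces the tensor product with $\t$ to vanish.

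For Part 3, I introduce the diagram $V_Y \in \P_n$ consisting of the single block $\{-y,y : y \in Y\}$ together with pairs $\{-k,k\}$ for $k \notin Y$. Since $|Y| \geq 2$, the big $Y$-block has at least four nodes, so $V_Y$ is not a permutation diagram and lies in $J_n$. A direct diagrammatic calculation verifies $V_Y^2 = V_Y$, so right multiplication by $V_Y$ is an idempotent endomorphism of the left $\P_n$-module $\P_n$. Tracing blocks through the middle, one checks that (i) $dV_Y \in M_Y$ for every $d \in \P_n$, since the big block of $V_Y$ forces the right $Y$-nodes into a common block of $dV_Y$, and (ii) $dV_Y = d$ for $d \in M_Y$, since the $Y$-block of $d$ merges cleanly with the big block of $V_Y$ with no floating middle-only blocks. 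Hence $M_Y = \P_n V_Y$ is a direct summand of $\P_n$. Since the pair $\{m_k, k\}$ of $V_Y$ for each $k \notin Y$ forces the block of $dV_Y$ at right $k$ to equal $d$'s block at right $k$, right multiplication by $V_Y$ preserves $J_{X-Y}$, and the induced idempotent on $\P_n/J_{X-Y}$ has image $\calM_{X,Y}$, establishing the direct-summand claim. Tensoring with $\t$, we use \autoref{tensor-P/J} to identify $\t \otimes_{\P_n} (\P_n/J_{X-Y})$ with $\t$, and compute the induced projection: $1 \otimes [dV_Y] = (1\cdot d) \otimes [V_Y]$ and $1 \otimes [V_Y] = (1 \cdot V_Y) \otimes [1] = 0$ since $V_Y \in J_n$, so the projection vanishes and $\t \otimes_{\P_n} \calM_{X,Y} = 0$.

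For Part 1, since $n \geq 2$ we may pick $y \in \{1,\dots,n\}\setminus\{x\}$ and use the element $V_{xy}T_x \in \P_n$. A direct computation verifies $V_{xy}T_xV_{xy} = V_{xy}$, which yields $(V_{xy}T_x)^2 = V_{xy}T_x$; moreover $V_{xy}T_x \in J_n$ because its right side has $\{x\}$ as a singleton. Arguing exactly as in Part 3, right multiplication by $V_{xy}T_x$ is an idempotent whose image is $A_x$: the $\{x\}$-block of $T_x$ forces $d(V_{xy}T_x)$ to have right $x$ a singleton, and for $d \in A_x$ a case analysis (distinguishing whether $d$'s block containing left $-x$ equals $d$'s block containing right $y$) shows $d(V_{xy}T_x) = d$, using crucially that $d \in A_x$ makes the middle node $m_x$ absorb into $V_{xy}$'s big block rather than float. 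Preservation of $J_{X-\{x\}}$ and the vanishing of $\t \otimes \calA_{X,x}$ then follow by the same argument as in Part 3.

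For Part 2, a diagram lies in $B_{X,x}$ precisely when it lies in $M_{\{x,y\}}$ for some $y \in X-\{x\}$, so $B_{X,x} = \sum_{y \in X-\{x\}} M_{\{x,y\}}$ as submodules of $\P_n$. Each $M_{\{x,y\}}$ satisfies $\t \otimes_{\P_n} M_{\{x,y\}} = 0$ by Part 3 applied to $Y = \{x,y\}$. Right-exactness of $\t \otimes_{\P_n} -$ applied to the surjection $\bigoplus_{y \in X-\{x\}} M_{\{x,y\}} \twoheadrightarrow B_{X,x}$ then gives $\t \otimes_{\P_n} B_{X,x} = 0$, and since $\calB_{X,x}$ is a quotient of $B_{X,x}$ we obtain $\t \otimes_{\P_n} \calB_{X,x} = 0$. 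The main technical obstacle is the combinatorial verification of (ii) in Part 1: tracking block merges in $d(V_{xy}T_x)$ carefully enough to rule out floating middle-only blocks, which would introduce unwanted $\delta$-factors.
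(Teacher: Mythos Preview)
Your arguments for Parts~2 and~3 are correct and close in spirit to the paper's.  The paper also uses the idempotent $V_Y$ for the direct-summand claim in Part~3, but for the vanishing statements it argues even more directly: for a spanning diagram $\alpha$ in $M_Y$ (respectively $B_{X,x}$) it simply writes $\alpha=\alpha\cdot V_Y$ (respectively $\alpha=\alpha\cdot V_{wx}$ for a suitable $w$) and observes that $\alpha$ itself lies in $J_n$, so $1\otimes\alpha=(1\cdot\alpha)\otimes V_Y=0$.  No passage through $\P_n/J_{X-Y}$ is needed.

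For Part~1 there is a genuine gap.  Your claim that right multiplication by $V_{xy}T_x$ preserves $J_{X-\{x\}}$ is false whenever $y\in X-\{x\}$, and since the lemma allows $X=\{1,\dots,n\}$ you cannot in general choose $y\notin X$.  Concretely, take $n=2$, $X=\{1,2\}$, $x=1$, $y=2$, and let $d$ be the diagram with blocks $\{-1,-2,1\}$ and $\{2\}$.  Then $d\in J_{\{2\}}$, but $dV_{12}T_1$ has blocks $\{-1,-2,2\}$ and $\{1\}$, so its right-$2$ node is not a singleton and $dV_{12}T_1\notin J_{\{2\}}$.  Thus your idempotent does not descend to $\P_n/J_{X-\{x\}}$, and ``the same argument as in Part~3'' does not transfer.

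The repair is already implicit in your setup and does not require preservation of $J_{X-\{x\}}$: you have shown that $A_x=\P_n\cdot(V_{xy}T_x)$ is a direct summand of $\P_n$ itself, so $\t\otimes_{\P_n}A_x$ is a summand of $\t\otimes_{\P_n}\P_n\cong\t$ whose projection is the (zero) action of $V_{xy}T_x\in J_n$ on $\t$.  Hence $\t\otimes_{\P_n}A_x=0$, and the same follows for its quotient $\calA_{X,x}$.  The paper's version is simpler still: for $\alpha\in A_x$ it writes $\alpha=\beta\cdot T_x$ where $\beta$ is obtained from $\alpha$ by placing $x$ into some other right-hand block (your $\alpha V_{xy}$ is exactly such a $\beta$), notes $\beta\in J_n$, and computes $1\otimes\alpha=(1\cdot\beta)\otimes T_x=0$ without invoking any idempotent.
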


\begin{proof}
    We will show that, under the relevant conditions, each of $A_x$, $B_{X,x}$ and $M_Y$ vanishes under $\t\otimes_{\P_n}\!\!-$, and the same will then follow for $\calA_{X,x}$, $\calB_{X,x}$ and $\calM_{X,Y}$.
    
    To show that  $\t\otimes_{\P_n}A_x=0$,
    we take a diagram $\alpha$ in $A_x$, so that $x$ is a singleton in $\alpha$.
    Let $\beta$ denote a diagram obtained from $\alpha$ by placing $x$ into the same block as some other element on the right. 
    (This is possible by the assumption that $n\geq 2$.)
    Then $\alpha = \beta\cdot T_x$, and $\beta$ acts as $0$ on $\t$, so that
    \[
        1\otimes \alpha = 1\otimes \beta\cdot T_x = 1\cdot\beta\otimes T_x = 0\otimes T_x = 0,
    \]
    noting that $T_{x}\in A_{x}$. Since $A_x$ is the span of such diagrams $\alpha$, this completes the proof.
    
    The argument for the other two modules is similar.
    For $\t\otimes_{\P_n}B_{X,x}$, we take a diagram $\alpha\in B_{X,x}$, so that $x$ is in the same block as some other element $w\in X$, and we use the factorisation $\alpha = \alpha\cdot V_{wx}$, noting that $V_{wx}\in B_{X,x}$.
    
    For $M_Y$ we take a diagram $\alpha\in M_Y$, so that all elements of $Y$ lie in the same block, and factorise it as $\alpha = \alpha\cdot V_Y$ where $V_Y\in M_Y$ is the diagram with blocks $-Y\cup Y$ and $\{-p,p\}$ for $p\not\in Y$; the assumption $|Y|\geq 2$ ensures that $\alpha$ acts as $0$ on $\t$.

    For the final claim about $\calM_{X,Y}$, we use the fact that the element $V_Y$ above is idempotent and sends $J_{X-Y}$ into itself.
\end{proof}

The following proposition breaks down the problem of resolving $\P_n/J_X$ into the analogous problem for $\calA_{X,x}$ and $\calB_{X,x}$.

\begin{prop}\label{A-B-resolution}
    Let $X\subseteq\{1,\dots,n\}$, let $x\in X$, and assume $n\geq 2$.
    The following sequence, in which all maps are induced by either an inclusion or an identity map, is a resolution of $\P_n/J_X$.
    \[\xymatrix@R=5pt{
        \dots
        \ar[r]
        &
        0
        \ar[r]
        &
        \calA_{X,x}
        \oplus
        \calB_{X,x}
        \ar[r]
        &
        \P_n/J_{X-\{x\}} 
        \ar[r]
        &
        \P_n/J_X
        \\
        &
        2
        &
        1
        &
        0
        &
        -1
    }\]
    Moreover, applying $\t\otimes_{\P_n}\!\!-$ to the sequence gives a resolution of $\t\otimes_{\P_n} \P_n/J_X$.
\end{prop}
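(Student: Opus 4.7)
The plan is to establish exactness of the short sequence
\[
 0 \to \calA_{X,x} \oplus \calB_{X,x} \xrightarrow{\phi} \P_n/J_{X-\{x\}} \xrightarrow{\psi} \P_n/J_X \to 0,
\]
where $\phi(\bar a,\bar b) = \bar a + \bar b$ and $\psi$ is the natural quotient. First, I would verify the decomposition of left ideals $J_X = J_{X-\{x\}} + A_x + B_{X,x}$, by splitting the diagram basis of $J_X$ according to the witness for membership: either the singleton or same-block pair inside $X$ lies entirely in $X-\{x\}$ (a basis diagram of $J_{X-\{x\}}$), or it involves $x$, in which case $x$ is itself a singleton (an $A_x$-diagram) or $x$ shares a block with some other $y \in X$ (a $B_{X,x}$-diagram). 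This decomposition makes $\psi$ surjective with kernel equal to the image of $A_x + B_{X,x}$ in $\P_n/J_{X-\{x\}}$, which is precisely $\calA_{X,x} + \calB_{X,x}$ by the definitions of these quotients. So $\operatorname{im}\phi = \ker\psi$.

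The crux is injectivity of $\phi$, i.e.\ showing $\calA_{X,x} \cap \calB_{X,x} = 0$ inside $\P_n/J_{X-\{x\}}$. The key observation is that each of $A_x$, $B_{X,x}$, $J_{X-\{x\}}$ is a subspan of the diagram basis of $\P_n$, hence a free $R$-module with basis the relevant set of diagrams; and crucially, the bases of $A_x$ (diagrams with $x$ a singleton) and $B_{X,x}$ (diagrams with $x$ in a non-singleton block meeting $X$) are disjoint. Therefore, given $a \in A_x$ and $b \in B_{X,x}$ with $a + b \in J_{X-\{x\}}$, decomposing in the diagram basis of $\P_n$ forces $a \in J_{X-\{x\}}$ and $b \in J_{X-\{x\}}$ separately, so $\bar a = 0$ in $\calA_{X,x}$ and $\bar b = 0$ in $\calB_{X,x}$.

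For the second claim, I would apply $\t \otimes_{\P_n} -$ to the resolution. By \autoref{modules-lemma}, both $\t \otimes_{\P_n} \calA_{X,x}$ (using $n \geq 2$) and $\t \otimes_{\P_n} \calB_{X,x}$ vanish, so the degree-$1$ term disappears. By \autoref{tensor-P/J}, the remaining terms $\t \otimes_{\P_n} \P_n/J_{X-\{x\}}$ and $\t \otimes_{\P_n} \P_n/J_X$ are both canonically isomorphic to $\t$, and under these identifications the induced map (coming from the quotient $\P_n/J_{X-\{x\}} \to \P_n/J_X$, which sends the class of $1$ to the class of $1$) is the identity on $\t$. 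Hence the induced sequence reads $0 \to 0 \to \t \xrightarrow{\id} \t \to 0$ and is exact.

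I do not expect a serious obstacle: the decomposition of $J_X$ is immediate by inspecting basis diagrams, and the injectivity step, which is the only place where something could conceivably fail, is carried by the purely combinatorial disjointness of the diagram bases defining $A_x$ and $B_{X,x}$.
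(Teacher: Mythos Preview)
Your proposal is correct and follows essentially the same approach as the paper: the paper also establishes surjectivity in degree $-1$, identifies the kernel in degree $0$ by analysing which diagrams lie in $J_X$ but not $J_{X-\{x\}}$, and proves injectivity in degree $1$ via the disjointness of the diagram bases of $A_x$ and $B_{X,x}$. For the second claim, the paper likewise invokes \autoref{modules-lemma} (using $n\geq 2$) to kill the degree-$1$ term and \autoref{tensor-P/J} to identify the remaining map as the identity on $\t$.
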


\begin{proof}
    The map $\P_n/J_{X-\{x\}} \to \P_n/J_X$ is induced by the identity map on $\P_n$ and is well defined since $J_{X-\{x\}} \subset J_{X}$. The map $\calA_{X,x} \to \P_n/J_{X-\{x\}}$ is induced by the inclusion $A_x \subset \P_n$ and is well defined since $(A_x\cap J_{X- \{x\}}) \subset J_{X - \{x\}}$, and a similar argument holds for the map $\calB_{X,x} \to\P_n/J_{X-\{x\}}$.

    Surjectivity of the right hand map is immediate, giving exactness in degree $-1$.
    
    To show exactness in degree $0$, observe that the ideals $J_{X-\{x\}}\subseteq J_X$ are both spanned by certain diagrams, so that the kernel of $\P_n/J_{X-\{x\}}\to \P_n/J_X$ is spanned by those diagrams that lie in $J_X$ but not $J_{X-\{x\}}$.
    For a diagram to lie in $J_X$, some element of $X$ must be a singleton, or two elements of $X$ must lie in the same block. 
    For it to not \emph{also} be an element of $J_{X-\{x\}}$, there must only be one singleton, namely $x$, or only one pair of elements lying in the same block, of which one must be $x$.
    The diagrams with $x$ a singleton are precisely the diagrams that span $A_x$, the diagrams in which $x$ lies in the same block as some other element of $X$ are precisely those that span $B_{X,x}$, and the proof follows.
    
    To show exactness in degree $1$, after unravelling the definitions of $\calA_{X,x}$ and $\calB_{X,x}$, it is sufficient to show that if we have $a\in A_x$ and $b\in B_{X,x}$ with $a+b\in J_{X-\{x\}}$, then $a,b\in J_{X-\{x\}}$.
    This follows quickly from fact that $A_x$ and $B_{X,x}$ have no basis elements in common.
    
    To prove the second claim, we will show that after applying $\t\otimes_{\P_n}\!\!-$, the resolution becomes 
    \[\xymatrix@R=5pt{
        \dots
        \ar[r]
        &
        0
        \ar[r]
        &
        0
        \ar[r]
        &
        \t
        \ar[r]^{\mathrm{Id}}
        &
        \t
        \\
        &
        2
        &
        1
        &
        0
        &
        -1
    }\]
    so that the claim follows directly.
    The identification of the final two terms and the map between them follows from \autoref{tensor-P/J}.
    The terms in degree $1$ 
    vanish by \autoref{modules-lemma}.
    (This is where we use the assumption $n\geq 2$.)
\end{proof}

The last result, together with \autoref{inductive-resolution-method}, shows that, in order to prove vanishing of higher $\Tor$'s for $\P_n/J_X$ by induction, we must first do the same for $\calA_{X,x}$ and $\calB_{X,x}$.
In the following two subsections we will construct resolutions of these.

\subsection{Resolving $\calA_{X,x}$}

We now attempt to resolve $\calA_{X,x}$.
It will turn out that this requires different methods depending on which assumption from~\autoref{theorem-inductive} we use: that $\delta$ is invertible, or that $|X|<n$. 
Under the first assumption we have the following.

\begin{prop}\label{A-summand}
    Suppose that $X\subseteq\{1,\dots,n\}$ and that $\delta$ is invertible in $R$.
    Then the module $\calA_{X,x}$ is a direct summand of $\P_n/J_{X-\{x\}}$.
\end{prop}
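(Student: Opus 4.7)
The plan is to construct, under the invertibility hypothesis on $\delta$, an explicit left $\P_n$-module retraction of the natural inclusion $\calA_{X,x} \hookrightarrow \P_n/J_{X-\{x\}}$, using right multiplication by an idempotent built from $T_x$. The key input is that right multiplication by any fixed element of $\P_n$ commutes with the left $\P_n$-action and is therefore a left $\P_n$-module endomorphism of $\P_n$.

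The first step is to verify by a direct diagrammatic calculation that
\[ T_x \cdot T_x = \delta \cdot T_x. \]
Indeed, in the product the middle column at position $x$ has an isolated node disconnected from both sides, contributing exactly one factor of $\delta$, while every other row $j \neq x$ reproduces the horizontal pair $\{-j,j\}$. Setting $e_x := \delta^{-1} T_x$, which is possible because $\delta$ is invertible, one obtains an idempotent in $\P_n$.

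The second step is a careful analysis of right multiplication by $T_x$ on an arbitrary diagram $\alpha \in \P_n$. The effect on the block structure is localized at position $x$ on the right: the node right-$x$ of $\alpha \cdot T_x$ is always a singleton, while for each $j \neq x$ the node right-$j$ of $\alpha \cdot T_x$ lies in a block that, on the right side, agrees with the block of right-$j$ in $\alpha$. Two consequences follow. First, $\alpha \cdot T_x$ always lies in $A_x$, and if $\alpha$ itself is in $A_x$ then $\alpha \cdot T_x = \delta \cdot \alpha$, because in that case the original singleton at right-$x$ becomes an isolated interior node producing the factor of $\delta$ while all other rows are unchanged. Second, right multiplication by $T_x$ sends $J_{X-\{x\}}$ into $A_x \cap J_{X-\{x\}}$: a singleton at some $y \in X - \{x\}$ in $\alpha$ remains a singleton in $\alpha \cdot T_x$, and any two elements of $X - \{x\}$ lying in a common block of $\alpha$ remain in a common block of $\alpha \cdot T_x$.

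Combining these, right multiplication by $e_x$ descends to a well-defined left $\P_n$-module homomorphism
\[ r \colon \P_n / J_{X - \{x\}} \longrightarrow \calA_{X,x}, \qquad r(\overline{\alpha}) = \overline{\alpha \cdot e_x}, \]
which, by the first consequence above, restricts to the identity on $\calA_{X,x} = (A_x + J_{X-\{x\}})/J_{X-\{x\}}$. Thus $r$ splits the natural inclusion and $\calA_{X,x}$ is a direct summand of $\P_n/J_{X-\{x\}}$. The main (and only delicate) obstacle is the diagrammatic bookkeeping in the second step: one must identify precisely which middle nodes become isolated loops contributing factors of $\delta$, and confirm that the block memberships at positions in $\{1,\dots,n\} \setminus \{x\}$ are genuinely preserved by post-composition with $T_x$. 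Once this verification is done cleanly the rest of the argument is formal.
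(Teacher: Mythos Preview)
Your proof is correct and follows essentially the same approach as the paper: both use the idempotent $\delta^{-1}T_x$ and the fact that right multiplication by it preserves $J_{X-\{x\}}$ to produce an idempotent endomorphism of $\P_n/J_{X-\{x\}}$ with image $\calA_{X,x}$. Your version is somewhat more explicit about the diagrammatic verification that right multiplication by $T_x$ preserves block membership at positions in $X-\{x\}$, but the argument is the same.
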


\begin{proof}
    The element $\delta^{-1}T_x$ is an idempotent, thanks to the computation $T_x^2 = \delta T_x$. 
    Right multiplication by $\delta^{-1}T_x$ sends $J_{X-\{x\}}$ into itself, and therefore induces an idempotent endomorphism of $\P_n/J_{X-\{x\}}$.
    The image of this endomorphism consists of all left multiples of $T_x$, and this is precisely $\calA_{X,x}=\frac{A_x}{A_x\cap J_{X-\{x\}}}$ as in the second paragraph of the proof of~\autoref{modules-lemma}.
\end{proof}

The above result shows that, if $\P_n/J_{X-\{x\}}$ has vanishing higher $\Tor$'s, then so does $\calA_{X,x}$.
When $\delta$ is not invertible, we need a more elaborate method using the following resolution.

\begin{defn}[The resolution $C(X,x,y)$]
    Suppose that $X\subset\{1,\dots,n\}$ with $|X|<n$, let $x\in X$, and let $y\in \{1,\dots,n\}-X$.
    We define a resolution $C(X,x,y)\to \calA_{X,x}$ as in~\autoref{C-figure}.
    \begin{figure}
        \[\xymatrix{
            \vdots
            \ar[d]_-{(1-T_xV_{xy})}&
            &
            \\
            \P_n/J_{X-\{x\}}
            \ar[d]_{ T_{x}V_{xy}}
            &
            2
            \\
            \P_n/J_{X-\{x\}}
            \ar[d]_{( 1-T_{x}V_{xy})}
            &
            1
            \\
            \P_n/J_{X-\{x\}}
            \ar[d]_{ T_{x}}
            &
            0
            \\
            \calA_{X,x}
            &
            -1
        } \]
        \caption{The resolution $C(X,x,y)\to\calA_{X,x}$ }
        \label{C-figure}
    \end{figure}
    Thus $C(X,x,y)$ is given by $\P_n/J_{X-\{x\}}$ in each degree. 
    The maps are all given by right-multiplication by the indicated elements, so that 
    the boundary maps alternate between $(1-T_xV_{xy})$ and $T_xV_{xy}$, and the augmentation $\P_n/J_{X-\{x\}}\to \calA_{X,x}$ is given by $T_x$.  
    The maps are well defined thanks to the fact that $T_xV_{xy}$ and $T_x$ send $J_{X-\{x\}}$ into itself; in the former case this follows from the fact that $y\not\in X$.

    To check that consecutive maps compose to $0$, one uses the computation $T_xV_{xy}T_x=T_x$ together with the resulting fact that $T_xV_{xy}$ is an idempotent.
    The fact that this really does define a resolution is given next. 
\end{defn}
    
\begin{prop}\label{C-resolution}
    Suppose that $X\subset\{1,\dots,n\}$ with $|X|<n$, let $x\in X$, and let $y\in \{1,\dots,n\}-X$.
    Assume that $n\geq 2$.
    Then
    \[
        C(X,x,y)\to\calA_{X,x}
        \qquad\text{and}\qquad
        \t\otimes_{\P_n}C(X,x,y)\to\t\otimes_{\P_n}\calA_{X,x}
    \]
    are both resolutions.
\end{prop}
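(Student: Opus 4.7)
The plan is to exploit the fact that $T_xV_{xy}$ is an idempotent of $\P_n$: from the stated identity $T_xV_{xy}T_x = T_x$ we obtain $(T_xV_{xy})^2 = T_xV_{xy}$, so right-multiplication by $T_xV_{xy}$ is an idempotent $\P_n$-linear endomorphism of $\P_n/J_{X-\{x\}}$. This yields the direct sum decomposition $\P_n/J_{X-\{x\}} = \ker(\cdot T_xV_{xy}) \oplus \im(\cdot T_xV_{xy})$, from which $\ker(\cdot T_xV_{xy}) = \im(\cdot(1-T_xV_{xy}))$ and vice versa. Exactness of $C(X,x,y)$ in all positive degrees is then automatic.

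Exactness in degree $0$ reduces, via the idempotent splitting, to the assertion $\ker(\cdot T_x) = \ker(\cdot T_xV_{xy})$ as submodules of $\P_n/J_{X-\{x\}}$, together with surjectivity of the augmentation $\cdot T_x$ onto $\calA_{X,x}$. I would establish both by direct diagram calculation. For a basis diagram $d$, right-multiplication by $T_x$ leaves the left side of $d$ and the right-side block structure at nodes $\neq x$ unchanged, while detaching $x$ from its block and making $\{x\}$ a new singleton; right-multiplication by $T_xV_{xy}$ does the same except that it reattaches $x$ to the block containing $y$. A short case analysis then shows that $d\cdot T_x$ and $d\cdot T_xV_{xy}$ both lie in $J_{X-\{x\}}$ precisely when $d$ itself lies in $J_{X-\{x\}}$ or when $\{w,x\}$ is a block of $d$ for some $w\in X-\{x\}$. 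The hypothesis $y\notin X$ is crucial at this point: it ensures that reattaching $x$ to $y$'s block cannot disturb the partition restricted to nodes in $X-\{x\}$. Surjectivity of the augmentation is immediate from the same picture: given $a\in A_x$, attaching $x$ to $y$'s block in $a$ yields a diagram $d$ satisfying $d\cdot T_x = a$.

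For the second assertion, I would invoke \autoref{tensor-P/J} to identify each $\t\otimes_{\P_n}(\P_n/J_{X-\{x\}})$ with $\t$, and \autoref{modules-lemma} to identify $\t\otimes_{\P_n}\calA_{X,x}$ with $0$. Since $T_xV_{xy}$ is not a permutation diagram --- indeed its right side contains the pair $\{x,y\}$ in one block, placing $T_xV_{xy}\in J_n$ --- it acts as zero on $\t$. Consequently $\cdot T_xV_{xy}$ tensors to the zero map $\t\to\t$ while $\cdot(1 - T_xV_{xy})$ tensors to the identity map. The tensored complex $\cdots \xrightarrow{\id}\t\xrightarrow{0}\t\xrightarrow{\id}\t\xrightarrow{0}\t\to 0$ is then acyclic by inspection, as the alternating identity and zero boundaries make all homology groups vanish, and this is all that is required given that the augmented target $\t\otimes_{\P_n}\calA_{X,x}$ is zero.

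The main obstacle is the combinatorial bookkeeping of the second paragraph: one must carefully track how right-multiplication by $T_x$ and by $T_xV_{xy}$ modifies the right-side partition of a diagram, and isolate precisely where the hypothesis $y\notin X$ is needed (notably, the equality $\ker(\cdot T_x) = \ker(\cdot T_xV_{xy})$ would fail if $y\in X-\{x\}$, since diagrams with $\{y,x\}$ as a block would distinguish the two kernels). Once these diagrammatic computations are in hand, the idempotent structure on $T_xV_{xy}$ reduces the remaining verifications to straightforward homological algebra.
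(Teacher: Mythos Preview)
Your overall strategy is sound and, for exactness in degree~$0$, is in fact tidier than the paper's argument.  The paper proves degree-$0$ exactness by an explicit reduction: it takes an element $a$ of the kernel, writes it in terms of basis diagrams, and uses relations of the form $\alpha - \delta\alpha_y = \alpha(1-T_xV_{xy})$ and $\alpha_B - \alpha_y = \alpha_B(1-T_xV_{xy})$ to adjust $a$ by boundaries, case by case, until it vanishes.  Your route---using the idempotent splitting to reduce to the equality $\ker(\cdot\,T_x)=\ker(\cdot\,T_xV_{xy})$ in $\P_n/J_{X-\{x\}}$---is more structural, and the equality itself follows purely algebraically: one inclusion from $T_xV_{xy}T_x=T_x$ together with $J_{X-\{x\}}\cdot T_x\subseteq J_{X-\{x\}}$, the other from $J_{X-\{x\}}\cdot V_{xy}\subseteq J_{X-\{x\}}$ (and this last containment is exactly where $y\notin X$ is used).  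One caution: your stated diagram-by-diagram characterisation of the kernel is not quite complete, since if $x$ is already a singleton in $d$ then $d\cdot T_x=\delta d$, and when $\delta$ is zero or a zero-divisor this produces kernel elements your case list does not cover.  The algebraic argument just described avoids this entirely and is what you should actually write.

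There is one genuine slip in your tensored complex: the parity of the maps is reversed.  The differential from degree~$1$ to degree~$0$ is right-multiplication by $(1-T_xV_{xy})$, which acts as the identity on~$\t$, not as zero.  The complex you wrote, with $d_1=0$, has $H_0=\t\neq 0$.  The correct pattern is
\[
\cdots \xrightarrow{\ 0\ }\t\xrightarrow{\ \id\ }\t\xrightarrow{\ 0\ }\t\xrightarrow{\ \id\ }\t\longrightarrow 0,
\]
which is indeed acyclic; this is also how the paper concludes.
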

    
\begin{proof}
    First, we must show that $C(X,x,y)\to\calA_{X,x}$ is acyclic.
    In degree $-1$ this is clear since $A_x$ consists of all left multiples of $T_x$.
    In degrees $1$ and above, this is an immediate consequence of the fact that $T_xV_{xy}$ is an idempotent.
    In degree $0$ we require a more complex argument, as follows.
    
    Suppose $\alpha$ is a diagram in which $x$ is a singleton.
    If $B$ is a block of $\alpha$ other than $\{x\}$, then we write $\alpha_B$ for the diagram obtained from $\alpha$ by incorporating $x$ into $B$.
    And we write $\alpha_y$ for the diagram $\alpha_{B_y}$, where $B_y$ is the block containing $y$.
    For example, the following diagrams show $\alpha$ with $B_y$ drawn in red and another block $B$ drawn in blue, together with $\alpha_B$ and $\alpha_y$.
    \[
        \begin{tikzpicture}[x=1.5cm,y=-.5cm,baseline=-1.05cm]
            
            \node[v,blue] (a1) at (0,0) {};
            \node[v,blue] (a2) at (0,1) {};
            \node[v,blue] (a3) at (0,2) {};
            \node[v,red] (a4) at (0,3) {};
            
            \node[v] (b1) at (1,0) {};
            \node[v,blue] (b2) at (1,1) {};
            \node[v,red] (b3) at (1,2) {};
            \node[v,red] (b4) at (1,3) {};
            
            \draw[e,blue] (a3) to[out=0, in=180] (b2);
            \draw[e,blue] (a1) to[out=0, in=0] (a2);
            \draw[e,blue] (a2) to[out=0, in=0]   (a3);
    
            \draw[e,red] (a4) to[out=0,in=180] (b4);
            \draw[e,red] (b4) to[out=180,in=180] (b3);
    
            \node[right] () at (b1) {$x$};
            \node[right,red] () at (b3) {$y$};
            \node () at (0.5,4) {$\alpha$};
        \end{tikzpicture}
        \qquad\qquad
        \begin{tikzpicture}[x=1.5cm,y=-.5cm,baseline=-1.05cm]
            
            \node[v] (a1) at (0,0) {};
            \node[v] (a2) at (0,1) {};
            \node[v] (a3) at (0,2) {};
            \node[v] (a4) at (0,3) {};
            
            \node[v] (b1) at (1,0) {};
            \node[v] (b2) at (1,1) {};
            \node[v] (b3) at (1,2) {};
            \node[v] (b4) at (1,3) {};
            
            \draw[e] (a3) to[out=0, in=180] (b2);
            \draw[e] (a1) to[out=0, in=0] (a2);
            \draw[e] (a2) to[out=0, in=0]   (a3);
    
            \draw[e] (a4) to[out=0,in=180] (b4);
            \draw[e] (b4) to[out=180,in=180] (b3);
            \draw[e] (b1) to[out=180,in=180] (b2);
    
            \node () at (0.5,4) {$\alpha_B$};
        \end{tikzpicture}
        \qquad\qquad
        \begin{tikzpicture}[x=1.5cm,y=-.5cm,baseline=-1.05cm]
            
            \node[v] (a1) at (0,0) {};
            \node[v] (a2) at (0,1) {};
            \node[v] (a3) at (0,2) {};
            \node[v] (a4) at (0,3) {};
            
            \node[v] (b1) at (1,0) {};
            \node[v] (b2) at (1,1) {};
            \node[v] (b3) at (1,2) {};
            \node[v] (b4) at (1,3) {};
            
            \draw[e] (a3) to[out=0, in=180] (b2);
            \draw[e] (a1) to[out=0, in=0] (a2);
            \draw[e] (a2) to[out=0, in=0]   (a3);
    
            \draw[e] (a4) to[out=0,in=180] (b4);
            \draw[e] (b4) to[out=180,in=180] (b3);
            \draw[e] (b1) to[out=180,in=180] (b3);
    
            \node () at (0.5,4) {$\alpha_y$};
        \end{tikzpicture}
    \]
    Now observe that we have the relations
    \begin{equation} \label{alpha-to-alpha-y}
        \alpha - \delta \alpha_y = \alpha (1 - T_xV_{xy})
    \end{equation}
    and, for each block $B$ in $\alpha$,
    \begin{equation}\label{alpha-B-to-alpha-y}
        \alpha_B - \alpha_y = \alpha_B (1 - T_xV_{xy}).
    \end{equation}

    Now consider an element $a\in\P_n/J_{X-\{x\}}$ in degree $0$ that lies in the kernel of the augmentation $T_x\colon\P_n/J_{X-\{x\}}\to\calA_{X,x}$.
    We wish to show that $a$ is in the image of the differential, and we will do this by explaining how to adjust $a$ by elements in the image of the differential (which does not change the fact that it lies in the kernel) in order to reduce it to $0$.
    We can write $a$ as a linear combination of diagrams in $\P_n$ that do not lie in $J_{X-\{x\}}$, and these can be divided into the following cases:
    \begin{enumerate}
        \item
        Diagrams $\alpha$ in which $x$ is a singleton.
        Using elements of the form \eqref{alpha-to-alpha-y}, we may adjust $a$ by elements in the image of the differential in order to replace all such diagrams $\alpha$ with ones of the form $\alpha_y$.
        \item
        Diagrams in which $x$ is connected to some element outwith $X-\{x\}$.
        These diagrams all have the form $\alpha_B$, where $\alpha$ is the diagram obtained from the original by making $x$ a singleton, and $B$ is the block of $\alpha$ that originally contained $x$.
        Note that the assumption that the original diagram did not lie in $J_{X-\{x\}}$ means that $\alpha$ also does not lie in $J_{X-\{x\}}$.
        Using elements of the form \eqref{alpha-B-to-alpha-y}, we may adjust $a$ by elements in the image of the differential in order to replace all such diagrams $\alpha_B$ with ones of the form $\alpha_y$.
        \item
        Diagrams $\beta$ in which $x$ is connected to exactly one element, say $w$, in $X-\{x\}$.
        Then in $\beta T_x V_{xy}$ the element $w\in X-\{x\}$ is a singleton, so that $\beta T_x V_{xy}\in J_{X-\{x\}}$.
        Consequently $\beta = \beta(1-T_xV_{xy})$ in $\P_n/J_{X-\{x\}}$, and in particular $\beta$ lies in the image of the differential.
        We may therefore adjust $a$ by elements in the image of the differential to remove all diagrams of this form.
    \end{enumerate}
    After modifying $a$ as explained in each item above, we may now write it as a linear combination $a = \sum_\alpha \lambda_\alpha \alpha_y$ where $\alpha$ ranges over all diagrams that are not in $J_{X-\{x\}}$ and in which $x$ is a singleton.  
    We know that $a$ lies in the kernel of the differential, so that $a\cdot T_x=0$.
    However, we have 
    $a\cdot T_x = \sum_\alpha \lambda_\alpha \alpha_y\cdot T_x = \sum_\alpha \lambda_\alpha\alpha$, and since the $\alpha$ are distinct diagrams not in $J_{X-\{x\}}$, we can conclude that $\lambda_\alpha = 0$ for all $\alpha$, or in other words that $a=0$.
    This completes the argument in degree $0$, and so completes the proof that $C(X,x,y)\to \calA_{X,x}$ is a resolution.
    
    We now prove that $\t\otimes_{\P_n}C(X,x,y)\to\t\otimes_{\P_n}\calA_{X,x}$ is a resolution. 
    The target vanishes by \autoref{modules-lemma}, and $\t\otimes_{\P_n}\P_n/J_{X-\{x\}}=\t$ since $J_{X-\{x\}}$ acts as $0$ on $\t$.  
    Under the latter identification, the boundary maps, which used to be given by right-multiplication by the indicated elements, are now given by the action of those elements on $\t$, and therefore alternate between $0$ and $1$.
    The result follows.
\end{proof}

\subsection{Resolving $\calB_{X,x}$}

We now turn to the module $\calB_{X,x}$, for which we build the following resolution.

\begin{Def}[The resolution $D(X,x)\to\calB_{X,x}$]
    Let $X\subseteq\{1,\dots,n\}$ and let $x\in X$.
    Define an augmented complex $D(X,x)\to \calB_{X,x}$ as follows.
    In degree $i\geq 0$, $D(X,x)$ is given by 
    \[
        \bigoplus_{(x_0,\dots,x_i)}
        \calM_{X,\{x,x_0,\dots,x_i\}}
    \] 
    where the sum is over all tuples $(x_0,\dots,x_i)$ of \emph{distinct} elements of $X-\{x\}$.
    And on the summand corresponding to a tuple $(x_0,\ldots,x_i)$, the map $\delta_i$ is given by the map
    \[
        \calM_{X,\{x,x_0,\dots,x_i\}}
        \longrightarrow
        \calM_{X,\{x,x_0,\dots,x_{i-1}\}}
    \]
    obtained from the inclusions 
    \begin{align*}
        M_{\{x,x_0,\dots, x_i\}}&\hookrightarrow M_{\{x,x_0,\dots ,x_{i-1}\}}
        \\
        J_{X-\{x,x_0,\dots,x_i\}}&\hookrightarrow J_{X-\{x,x_0,\dots,x_{i-1}\}}.
    \end{align*}
    The map $\delta_i$ is simply the sum of these individual maps. To it put briefly, $\delta_i$ is the map that forgets that $x_i$ had to be in the same block as $x,x_0,\dots,x_{i-1}$.
    The complex is illustrated in \autoref{D-figure}.
    \begin{figure}
    \[
        \xymatrix{
            \vdots
            \ar[d]^-{\delta_3}
            \\
            \displaystyle\bigoplus_{(x_0,x_1,x_2)}
            \calM_{X,\{x,x_0,x_1,x_2\}}
            \ar[d]^-{\delta_2}
            &
            2
            \\
            \displaystyle\bigoplus_{(x_0,x_1)}
            \calM_{X,\{x,x_0,x_1\}}
            \ar[d]^-{\delta_1}
            &
            1
            \\
            \displaystyle\bigoplus_{(x_0)}
            \calM_{X,\{x,x_0\}}
            \ar[d]^-{\delta_0}
            &
            0
            \\
            \calB_{X,x}
            & -1
        }
    \]
    \caption{The resolution $D(X,x)\to \calB_{X,x}$.   Summations are over tuples of distinct elements of $X-\{x\}$.}
    \label{D-figure}
    \end{figure}
    Each composite $\delta_{i-1}\circ\delta_i$ vanishes, because any element in its image is a sum of diagrams that each contain two elements $x_{i-1},x_i\in X-\{x,x_1,\dots,x_{i-2}\}$ in the same block, and which therefore lie in $J_{X-\{x,x_1,\dots,x_{i-2}\}}$.
    We prove that this is indeed a resolution in \autoref{D-resolution}. 
\end{Def}

\begin{prop}\label{D-resolution}
    $D(X,x)\to \calB_{X,x}$ is indeed a resolution,
    and the same is true for $\t\otimes_{\P_n}D(X,x)\to \t\otimes_{\P_n}\calB_{X,x}$.
\end{prop}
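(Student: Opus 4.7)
The plan is to decompose the augmented complex as a direct sum of ``$\beta$-parts'' indexed by the individual diagrams $\beta \in \P_n$, and to show that each $\beta$-part is acyclic. This decomposition is respected by the boundary maps because $\delta_i$ only forgets the last slot in the tuple while preserving the underlying diagram. For each $\beta$, write $B$ for its block containing $x$ and set $T(\beta) = B \cap (X - \{x\})$. A pair $((x_0, \ldots, x_i), \beta)$ is a basis element of $D_i$ precisely when $\{x_0, \ldots, x_i\} \subseteq T(\beta)$ (so $\beta \in M_{\{x, x_0, \ldots, x_i\}}$) and $\beta \notin J_{X - \{x, x_0, \ldots, x_i\}}$ (so $\beta$ survives in the quotient).

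The first step is to argue that if $\beta$ has a singleton in $X - \{x\}$ or a block other than $B$ containing at least two elements of $X - \{x\}$, then $\beta$ contributes nothing to the complex: in both cases the offending elements lie outside $T(\beta)$ and cannot be absorbed into any $\{x_0, \ldots, x_i\} \subseteq T(\beta)$, so the second condition above always fails. For the remaining ``good'' $\beta$ with $m = |T(\beta)|$, the requirement $\beta \notin J_{X - \{x, x_0, \ldots, x_i\}}$ combined with the fact that elements of $T(\beta) \setminus \{x_0, \ldots, x_i\}$ all pair up inside $B$ forces $|T(\beta) \setminus \{x_0, \ldots, x_i\}| \leq 1$. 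Hence the $\beta$-part is supported in degree $0$ when $m = 1$, and in degrees $\{m-2, m-1\}$ when $m \geq 2$, with $m!$ generators in each occupied degree.

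The heart of the proof is then the analysis of the differential restricted to a $\beta$-part for $m \geq 2$. The map $\delta_{m-1}$ is a bijection on $\beta$-parts, because each degree-$(m-2)$ tuple $(x_0, \ldots, x_{m-2})$ determines a unique missing element $y = T(\beta) \setminus \{x_0, \ldots, x_{m-2}\}$ and hence a unique preimage $(x_0, \ldots, x_{m-2}, y)$. Meanwhile $\delta_{m-2}$ vanishes on the $\beta$-part, because the target ($\calM_{X, \{x, x_0, \ldots, x_{m-3}\}}$ if $m \geq 3$, and $\calB_{X,x}$ if $m = 2$) contains $\beta$ as zero: the block $B$ still contains both of the two elements of $T(\beta)$ not in $\{x_0, \ldots, x_{m-3}\}$, so $\beta$ lies in the relevant $J$-ideal. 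When $m = 1$, the $\beta$-part consists of a single degree-$0$ generator mapping to $[\beta] \in \calB_{X,x}$, and varying over good $\beta$ with $|T(\beta)| = 1$ gives exactly the basis of $\calB_{X,x}$. Together these facts show the augmented complex is acyclic on each $\beta$-part, and hence overall.

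The second claim follows immediately: by \autoref{modules-lemma}, every summand $\calM_{X, \{x, x_0, \ldots, x_i\}}$ with $i \geq 0$ vanishes under $\t \otimes_{\P_n} -$ (the indexing set has size $\geq 2$), and so does $\calB_{X,x}$, so the tensored augmented complex is identically zero and trivially a resolution. The main obstacle is the two-degree bookkeeping in the $\beta$-part analysis: one must carefully verify that no $\beta$-part leaks into unexpected degrees, that the map $\delta_{m-1}$ really is a bijection between the two occupied degrees, and that $\delta_{m-2}$ vanishes for the right reason in both the $m = 2$ and $m \geq 3$ cases.
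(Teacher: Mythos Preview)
Your argument is correct and complete, but it proceeds along a genuinely different decomposition than the one the paper uses.

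The paper works degree by degree.  In degree $-1$ it checks surjectivity of $\delta_0$ directly.  In degree $0$ it first shows that the images of the various $x_0$-summands under $\delta_0$ intersect trivially in $\calB_{X,x}$, so that one may verify exactness one $x_0$-summand at a time; then it identifies the kernel of $\delta_0$ on that summand with diagrams lying in $J_{X-\{x\}}\setminus J_{X-\{x,x_0\}}$ and argues that any such diagram must have some $x_1$ in the same block as $x,x_0$, hence lies in the image of $\delta_1$.  In degrees $i\geq 1$ it observes that the complex in degrees $i-1,i,i+1$ splits over the initial segment $(x_0,\dots,x_{i-1})$ and leaves the analogous local verification to the reader.

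Your approach instead splits the \emph{entire} augmented complex as a direct sum indexed by the underlying diagram $\beta$, using that $\delta_i$ sends a basis element $((x_0,\dots,x_i),\beta)$ either to $((x_0,\dots,x_{i-1}),\beta)$ or to zero.  Each $\beta$-part is then a tiny two-term complex (in degrees $m-1$ and $m-2$ when $m=|T(\beta)|\geq 2$, or degrees $0$ and $-1$ when $m=1$), and you check acyclicity of each directly.  This is cleaner and more uniform: it handles all degrees simultaneously with a single combinatorial observation, whereas the paper's argument requires a separate local analysis at each degree and leaves the higher degrees as an exercise.  Both proofs use \autoref{modules-lemma} in the same way for the second claim about $\t\otimes_{\P_n}-$.
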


\begin{proof}
    We first prove that $D(X,x) \to \calB_{X,x}$ is acyclic.
    
    In degree $-1$ we must show that $\delta_0$ is surjective. 
    A diagram in $B_{X,x}$ has $x$ in the same block as some other element $x_0$ of $X-\{x\}$, and therefore lies in the image of the inclusion $M_{\{x,x_0\}}\hookrightarrow B_{X,x}$, and surjectivity follows.
    
    In degree $0$, we first observe that if we consider any two summands in degree $0$, then their images under $\delta_0$ have trivial intersection.
    Indeed, this follows quickly from the fact that if $x_0$ and $x_0'$ are distinct elements of $X-\{x\}$, then $M_{\{x,x_0\}}\cap M_{\{x,x_0'\}}\subseteq J_{X-\{x\}}$, which itself holds because a diagram in  $M_{\{x,x_0\}}\cap M_{\{x,x_0'\}}$ has $x_0$ and $x_0'$ in the same block as $x$, and therefore in the same block as one another.
    So to prove exactness in degree $0$ we can look at just one $x_0$-summand at a time:
    \[
        \xymatrix{
            \displaystyle\bigoplus_{x_1\in X-\{x,x_0\}}
            \calM_{X,\{x,x_0,x_1\}}
            \ar[d]^-{\delta_1}
            &
            1
            \\
            \calM_{X,\{x,x_0\}}
            \ar[d]^-{\delta_0}
            &
            0
            \\
            \displaystyle
            \calB_{X,x}
            & -1
        }
    \]
    To prove that this sequence is exact at its middle term, observe that the kernel of $\delta_0$ is spanned by diagrams in $M_{\{x,x_0\}}$ that lie in $J_{X-\{x\}}-J_{X-\{x,x_0\}}$.
    Pick such a diagram.
    For the diagram to lie in $J_{X-\{x\}}$, two elements of $X-\{x\}$ must be in the same block, or an element of  $X-\{x\}$ must be a singleton.
    For the diagram to lie outwith $J_{X-\{x,x_0\}}$, since~$x_0$ cannot be a singleton in $M_{\{x,x_0\}}$, we conclude that $x_0$ must be in the same block as some other element of  $X-\{x\}$.
    So, $x_0$ lies in the same block as some element $x_1\in X-\{x,x_0\}$, and since the diagram is in $M_{\{x,x_0\}}$ it follows that $x,x_0,x_1$ must all be in the same block.
    Thus the diagram is in $M_{\{x,x_0,x_1\}}$, and so lies in the image of $\delta_1$.
    
    To prove exactness in degree $i\geqslant 1$ and above, one first observes that in degrees $i-1,i,i+1$ the complex splits as a direct sum over $(x_0,\ldots,x_{i-1})$.
    It is therefore enough to concentrate on a single $(x_0,\ldots,x_{i-1})$-summand at a time.
    Having restricted to such a summand, one now proves exactness similarly to the degree $0$ case, and we leave the details of this to the reader.

    The fact that the resolution remains acyclic after applying $\t\otimes_{\P_n}\!\!-$ follows immediately from \autoref{modules-lemma}, which shows that in fact the resolution vanishes under this operation.
\end{proof}

\subsection{Proof of  Theorem~\ref{theorem-inductive}}
    We first tackle the cases $n=0,1$.
    When $n=0$ we have $\P_n=R$ and the claim follows immediately.
    When $n=1$, we either have $X=\emptyset$, or we have $X=\{1\}$ and $\delta$ invertible.
    When $X=\emptyset$ we have $J_X=0$, so that $\P_n/J_X = \P_n$ and the claim follows.
    Finally, when $X=\{1\}$ and $\delta$ is invertible, then $J_X$ is the $R$-span, and indeed the $\P_n$-span, of the idempotent $\delta^{-1}T_{1}$. 
    Thus $J_X$ and $\P_n/J_X$ are both direct summands of $\P_n$, and in particular the latter is projective, so that the claim follows.
    
    We now assume that $n\geq 2$, and prove the claim by strong induction on the cardinality of $X$.
    When $X=\emptyset$ we have $J_X=0$, so that $\P_n/J_X = \P_n$ and the claim is immediate.
    
    Suppose now that $|X|>0$ and that the claim holds for all $X'$ of a smaller cardinality.
    According to \autoref{A-B-resolution} and \autoref{inductive-resolution-method}, it will be sufficient to show that the modules
    \[
        \P_n/J_{X-\{x\}} 
        \qquad
        \calA_{X,x}
        \qquad
        \calB_{X,x}
    \]
    all vanish under $\Tor^{\P_n}_i(\t,-)$ for $i>0$.
    
    In the case of $\P_n/J_{X-\{x\}}$ we have $\Tor^{\P_n}_i(\t,\P_n/J_{X-\{x\}})=0$ by the inductive hypothesis.
    
    For $\calA_{X,x}$,  we divide into the case where $\delta$ is invertible, and the case where $|X|<n$.  
    When $\delta$ is invertible, \autoref{A-summand} shows that $\calA_{X,x}$ is a direct summand of $\P_n/J_{X-\{x\}}$, which vanishes under $\Tor^{\P_n}_i(\t,-)$ by the inductive hypothesis, so that $\calA_{X,x}$ does as well.
    When $|X|<n$, \autoref{C-resolution} gives us resolutions 
    $C(X,x,y)\to\calA_{X,x}$
    and 
    $\t\otimes_{\P_n}C(X,x,y)\to\t\otimes_{\P_n}\calA_{X,x}$.
    The terms of $C(X,x,y)$ are all $\P_n/J_{X-\{x\}}$, which vanish under $\Tor^{\P_n}_i(\t,-)$ by the inductive hypothesis, so that \autoref{inductive-resolution-method} applies to tell us that the same is true for $\calA_{X,x}$ itself.
    
    For $\calB_{X,x}$, \autoref{D-resolution} gives us the resolutions
    $D(X,x)\to \calB_{X,x}$ 
    and 
    $\t\otimes_{\P_n}D(X,x)\to \t\otimes_{\P_n}\calB_{X,x}$.
    The terms of $D(X,x)$ are direct sums of modules of the form $\calM_{X,\{x,x_0,\dots,x_i\}}$. 
    Each $\calM_{X,\{x,x_0,\dots,x_i\}}$ is a direct summand of  $\P_n/J_{X-\{x,x_0,\dots,x_i\}}$ by \autoref{modules-lemma}, and since $\Tor_i^{\P_n}(\t,-)$ vanishes on the latter, it also vanishes on the former.
    (Note that this is the only place in our argument where we have used \emph{strong} induction.)
    We can now apply \autoref{inductive-resolution-method} to $D(X,x)$ to find that $\calB_{X,x}$ vanishes under $\Tor_i^{\P_n}(\t,-)$ as required.\qed

\section{Replacing Shapiro's Lemma}\label{sec-replacing shapiros lemma}
This section closely follows Section 4 of \cite{BHP2021}. We include all statements, and proofs of the lemmas which slightly differ in the case of partition algebras. The proof of \autoref{thm-tor-quotient-Homology-Sm} is identical to that in~\cite{BHP2021}, with adapted inputs.

As in the case for the Brauer algebras, we have inclusion and projection maps
\[
    R\fS_m\xrightarrow{\iota} \P_m\xrightarrow{\pi} R\fS_m.
\]
These are compatible with the inclusions $\P_{m}\to\P_n$ and $R\fS_{m}\to R\fS_n$, and also respect the actions on the trivial module.  
They therefore induce the following maps of $\Tor$-groups.
\[
    \Tor_\ast^{R\fS_n}(\t,R\fS_n\otimes_{R\fS_m}\t)
    \xrightarrow{\iota_\ast}
    \Tor_\ast^{\P_n}(\t,\P_n\otimes_{\P_m}\t)
    \xrightarrow{\pi_\ast}
    \Tor_\ast^{R\fS_n}(\t,R\fS_n\otimes_{R\fS_m}\t)
\]

Then the main result of this section is the following Theorem, which replaces Shapiro's Lemma in the Quillen style proof of homological stability for groups.

\begin{thm}\label{thm-tor-quotient-Homology-Sm}
Let $n\geqslant m\geqslant 0$. 
Suppose that $\delta$ is invertible in $R$, or that $m<n$.
Then the maps
\[
    \iota_\ast\colon
    \Tor_\ast^{R\fS_n}(\t,R\fS_n\otimes_{R\fS_m}\t)
    \longrightarrow
    \Tor_\ast^{\P_n}(\t,\P_n\otimes_{\P_m}\t)
\]
and
\[
    \pi_\ast\colon
    \Tor_\ast^{\P_n}(\t,\P_n\otimes_{\P_m}\t)
    \longrightarrow
    \Tor_\ast^{R\fS_n}(\t,R\fS_n\otimes_{R\fS_m}\t)
\]
are mutually inverse isomorphisms.
\end{thm}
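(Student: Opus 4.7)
The plan is to compute both $\Tor$-groups explicitly as the homology of the complex $\t \otimes_{R\fS_m} F_\bullet$, where $F_\bullet \to \t$ is a projective $R\fS_m$-resolution of the trivial module, and then to track that $\iota_*$ corresponds to the identity map under these identifications. The first step is to observe that $\P_n/J_m$ is \emph{free} as a right $R\fS_m$-module with respect to the action coming from $R\fS_m\subset\P_m\subset\P_n$: a basis diagram of $\P_n/J_m$ has each of its top $m$ right nodes in a distinct block containing some other node, so the right $\fS_m$-action permuting those labels has trivial stabilizer. The same combinatorial analysis, combined with the description of the basis of $\P_n \otimes_{\P_m} \t$ in \autoref{prop:HomBr}, gives an identification $\P_n \otimes_{\P_m} \t \cong (\P_n/J_m) \otimes_{R\fS_m} \t$ of left $\P_n$-modules.

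Using freeness, $\P_n/J_m \otimes_{R\fS_m} F_\bullet \to \P_n \otimes_{\P_m} \t$ is a resolution of left $\P_n$-modules, with each term a direct sum of copies of $\P_n/J_m$. By \autoref{theorem-inductive} applied to $X = \{n-m+1, \ldots, n\}$ (where $|X| = m$, so its hypothesis is precisely that $\delta$ is invertible or $m < n$), the higher groups $\Tor^{\P_n}_{>0}(\t, \P_n/J_m)$ vanish, and hence so do those of each term. The resolution is therefore $\Tor^{\P_n}(\t,-)$-acyclic and so computes the $\Tor$-group in question: applying $\t \otimes_{\P_n} -$ and using \autoref{tensor-P/J} to identify $\t \otimes_{\P_n} \P_n/J_m = \t$, the complex reduces to $\t \otimes_{R\fS_m} F_\bullet$, with homology $H_*(\fS_m; \t)$. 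This shows $\Tor^{\P_n}_*(\t, \P_n \otimes_{\P_m} \t) \cong H_*(\fS_m; \t)$.

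On the group side, since $R\fS_n$ is free as a right $R\fS_m$-module, the complex $R\fS_n \otimes_{R\fS_m} F_\bullet \to R\fS_n \otimes_{R\fS_m} \t$ is a projective $R\fS_n$-resolution and after applying $\t \otimes_{R\fS_n} -$ also reduces to $\t \otimes_{R\fS_m} F_\bullet$, recovering Shapiro's isomorphism. The ring homomorphism $\iota_n\colon R\fS_n \to \P_n$ factors through $\P_n/J_m$ (since permutation diagrams lie outside $J_m$) and induces a chain map $R\fS_n \otimes_{R\fS_m} F_\bullet \to \P_n/J_m \otimes_{R\fS_m} F_\bullet$ covering the module map $\iota\colon R\fS_n \otimes_{R\fS_m} \t \to \P_n \otimes_{\P_m} \t$. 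After applying $\t \otimes -$ on the respective sides, this chain map becomes the identity on $\t \otimes_{R\fS_m} F_\bullet$, since $\iota_n(\sigma)$ acts on $\t$ as $\sigma\in R\fS_n$ does, namely trivially. Hence $\iota_*$ corresponds to the identity of $H_*(\fS_m;\t)$ and is an isomorphism; since $\pi \circ \iota = \id$ forces $\pi_* \circ \iota_* = \id$, the map $\pi_* = \iota_*^{-1}$ is also an isomorphism.

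The hardest ingredient is the freeness of $\P_n/J_m$ over $R\fS_m$ together with the identification $\P_n \otimes_{\P_m} \t \cong (\P_n/J_m) \otimes_{R\fS_m} \t$, both of which rely on the explicit diagrammatic description of $\P_n \otimes_{\P_m} \t$ in \autoref{prop:HomBr}. Once these are in place and \autoref{theorem-inductive} is available, the rest is a formal manipulation of acyclic resolutions, following the structure of the corresponding proof in \cite{BHP2021}.
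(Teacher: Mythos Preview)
Your proposal is correct and follows essentially the same approach as the paper, which defers to \cite[Proof of Theorem~4.1]{BHP2021} after establishing the two key lemmas you identify: that $\P_n/J_m$ is free as a right $R\fS_m$-module (\autoref{lem-quotient-by-Jm-free}) and that $\P_n/J_m\otimes_{R\fS_m}\t\cong\P_n\otimes_{\P_m}\t$ (\autoref{lem-quotient-by-Jm-tensor-product}). Your write-up in fact unpacks the argument that the paper leaves as a citation, using the $\Tor^{\P_n}(\t,-)$-acyclic resolution $\P_n/J_m\otimes_{R\fS_m}F_\bullet$ together with \autoref{theorem-inductive} and \autoref{tensor-P/J} exactly as intended.
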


\autoref{thma} follows immediately from \autoref{thm-tor-quotient-Homology-Sm} by taking $\delta$ invertible and $m=n$, using the identifications    $R\fS_n\otimes_{R\fS_m}\t\cong\t$ and  $\P_n\otimes_{\P_m}\t\cong \t$.

The remainder of this section is devoted to proving \autoref{thm-tor-quotient-Homology-Sm}, which follows in exactly the same way as Theorem 4.1 of \cite{BHP2021} after some preparatory definitions and lemmas.

Recall from \autoref{Def:Jx} that~$J_m\subseteq \P_n$ denotes the ideal consisting of all diagrams in which, among the nodes on the right labelled by $\{n-m+1,\dots, n\}$, there is a least one singleton or one pair of nodes in the same block. Observe that $\P_n$ is a right $R\fS_m$-module, via the inclusions $R\fS_m\subseteq \P_m\subseteq\P_n$, and that this module structure preserves $J_m$, since right multiplying by a diagram which permutes the nodes~$\{n-m+1,\dots, n\}$ does not change whether there exists a singleton or two nodes in the same block in this set. Therefore we have that $\P_n/J_m$ is a right $R\fS_m$-module.

\begin{lem}\label{lem-quotient-by-Jm-free}
For~$m\leq n$, $\P_n/J_m$ is free when regarded as a right $R\fS_m$-module.
\begin{proof}
$\P_n/J_m$ has basis consisting of the diagrams for which the nodes in $\{n-m+1,\dots, n\}$ have no singleton, and no two nodes in the same block. This means that each node in $\{n-m+1,\dots, n\}$ is attached to a distinct block in the diagram.
Now, $\fS_m$ acts freely on this basis, since multiplying such a diagram with a permutation in~$\fS_m$ results again in a diagram where the nodes in $\{n-m+1,\dots, n\}$ are attached to distinct blocks. Under this action, the stabilizer of any such diagram is trivial.
\end{proof}
\end{lem}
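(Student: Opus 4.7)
The plan is to produce an explicit basis of $\P_n/J_m$ as a right $R\fS_m$-module by exhibiting the set of diagrams described in the statement as a permutation basis for the $\fS_m$-action, then choosing one diagram per orbit.

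First I would identify an $R$-basis of $\P_n/J_m$. Since $J_m$ is the $R$-span of those basis diagrams of $\P_n$ that have either a singleton among the right-hand nodes $\{n-m+1,\dots,n\}$ or two such nodes lying in a common block, the quotient $\P_n/J_m$ has $R$-basis $\mathcal{B}$ consisting of all partition diagrams in which each of the nodes $n-m+1,\dots,n$ belongs to a distinct block and none of them is a singleton; equivalently, each of these $m$ nodes is attached to its own block.

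Next I would verify that the right $\fS_m$-action on $\P_n/J_m$ permutes $\mathcal{B}$. For $\alpha\in\mathcal{B}$ and $\sigma\in\fS_m$, viewed as a permutation diagram inside $\P_n$ via the inclusion $R\fS_m\subseteq\P_m\subseteq\P_n$, the product $\alpha\cdot\sigma$ is obtained simply by relabelling the nodes $n-m+1,\dots,n$ according to $\sigma$. No internal loops are created during this composition (permutation diagrams have all middle nodes used as through-strands), so no factors of $\delta$ appear, and the resulting diagram still has each node of $\{n-m+1,\dots,n\}$ attached to its own block. Thus $\alpha\cdot\sigma\in\mathcal{B}$.

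Finally I would show that the $\fS_m$-action on $\mathcal{B}$ is free. If $\alpha\cdot\sigma=\alpha$ for some $\alpha\in\mathcal{B}$, then in $\alpha$ each node $n-m+1,\dots,n$ is attached to a unique block, and these $m$ blocks are pairwise distinct; since $\sigma$ permutes the attachment pattern, the equality $\alpha\cdot\sigma=\alpha$ forces $\sigma$ to fix each such node, so $\sigma=e$. Choosing a set $\mathcal{B}_0\subseteq\mathcal{B}$ of representatives of $\fS_m$-orbits then realises $\P_n/J_m$ as a free right $R\fS_m$-module with basis $\mathcal{B}_0$. There is no major obstacle here; the only point that requires a moment of care is the verification that multiplying by a permutation diagram introduces no $\delta$ factors and does not change which nodes lie in which block (up to the $\sigma$-relabelling), which is immediate from the diagrammatic description of multiplication.
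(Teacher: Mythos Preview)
Your proposal is correct and follows essentially the same argument as the paper: identify the $R$-basis of $\P_n/J_m$ as the diagrams with the $m$ specified nodes in distinct non-singleton blocks, check that right multiplication by $\fS_m$ permutes this basis, and verify trivial stabilisers. Your write-up is in fact slightly more explicit, noting that no $\delta$ factors arise and spelling out the choice of orbit representatives, but the approach is the same.
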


\begin{lem}\label{lem-quotient-by-Jm-tensor-product}
    For~$m\leq n$, there is an isomorphism of left $\P_n$-modules
    \[
    \P_n/J_m\otimes_{R\fS_m}\t \cong \P_n\otimes_{\P_m}\t 
    \]
    under which $(b+J_m)\otimes r\in\P_n/J_m\otimes_{R\fS_m}\t$ corresponds to $b\otimes r\in \P_n\otimes_{\P_m}\t$.
\end{lem}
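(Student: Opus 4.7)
The plan is to exhibit mutually inverse $\P_n$-linear maps
\[
    \phi\colon \P_n\otimes_{\P_m}\t \longrightarrow \P_n/J_m \otimes_{R\fS_m} \t,
    \qquad
    \psi\colon \P_n/J_m \otimes_{R\fS_m} \t \longrightarrow \P_n\otimes_{\P_m}\t
\]
defined on simple tensors by $b\otimes r\mapsto (b+J_m)\otimes r$ and $(b+J_m)\otimes r\mapsto b\otimes r$ respectively. Such a pair is trivially mutually inverse once both are defined, so all the work lies in checking well-definedness.

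For $\phi$, the only nontrivial point is that it respects the $\P_m$-balancing. For a permutation $\sigma\in R\fS_m$ this is automatic, and for a non-permutation diagram $a\in\P_m$ we have $b\otimes a\cdot r=b\otimes 0=0$, so we must show $ba\in J_m$. This is immediate: if $a$ has either a singleton, or two nodes lying in the same block, among its right-hand nodes $\{1,\ldots,m\}$, then the same feature appears on the nodes $\{n-m+1,\ldots,n\}$ of $ba$, because diagram multiplication only ever merges blocks, never splits them.

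For $\psi$, the $R\fS_m$-balancing is inherited from the $\P_m$-balancing since $R\fS_m\subseteq\P_m$, so the real task is to show $b\otimes r=0$ in $\P_n\otimes_{\P_m}\t$ whenever $b\in J_m$. The strategy is to exhibit explicit factorisations $b=b'\cdot e$ with $e\in\P_m$ a non-permutation diagram, whence $b\otimes r=b'\otimes e\cdot r=0$. Two subcases cover every diagram in $J_m$. First, if $b$ has a singleton at a right-hand node $k\in\{n-m+1,\ldots,n\}$, form $b^\natural$ from $b$ by attaching $k$ to any other node of the diagram; a direct computation in the diagram calculus will give $b^\natural\cdot T_k=b$, noting that for such $k$ the element $T_k$ lies in the embedded subalgebra $\P_m$ and is a non-permutation. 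Second, if two distinct right-hand nodes $j,k\in\{n-m+1,\ldots,n\}$ of $b$ lie in a common block $B$, choose any partition $B=B_j\sqcup B_k$ with $j\in B_j$ and $k\in B_k$, form $b^\dagger$ by replacing $B$ with $B_j$ and $B_k$, and check that $b^\dagger$ multiplied by the embedded diagram in $\P_m$ with single block $\{-j,-k,j,k\}$ and identity pairs elsewhere gives back $b$.

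The main obstacle is the diagram bookkeeping required to verify these two factorisations, and in particular to confirm that no stray factor of $\delta$ arises from a middle-only block after multiplication. This works out in each case because the middle block associated with the modified nodes always fuses with a component that reaches the left or right boundary of the product, and so no free-floating middle block is produced. Once these factorisations are in hand, $\phi$ and $\psi$ are both well defined and inverse to one another, yielding the claimed isomorphism under which $(b+J_m)\otimes r$ corresponds to $b\otimes r$.
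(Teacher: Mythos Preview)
Your proof is correct and follows essentially the same approach as the paper's: define the two candidate inverse maps and verify well-definedness, using the decomposition $\P_m = R\fS_m \oplus (J_m\cap\P_m)$ implicitly. The only difference is that where the paper simply asserts that any $j\in J_m$ can be written as a sum of products $b\cdot j'$ with $j'\in J_m\cap\P_m$, you supply the explicit factorisations $b=b^\natural\cdot T_k$ and $b=b^\dagger\cdot V_{jk}$, together with the check that no stray $\delta$ appears; this makes your argument more self-contained but is otherwise the same proof.
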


\begin{proof}
Throughout this proof we regard~$J_m$ as an ideal in~$\P_n$, and write~$J_m\cap \P_m$ for the corresponding ideal in~$\P_m$.

Let us show that the maps
\[ (b+J_m)\otimes r \longmapsto b\otimes r\]
and
\[ b\otimes r \longmapsto (b+J_m)\otimes r\]
are well-defined. It then immediately follows that they are inverses and thus isomorphisms.

For the first map, we need to show that $b\sigma \otimes r = b\otimes r$ for $\sigma \in \fS_m$ and that $j \otimes r \in \P_n \otimes_{\P_m} \t$ is zero if $j \in J_m$. The first equation follows immediately as $\sigma \in \fS_m \subset \P_m$ acts as the identity on $\t$. 
The second condition holds because if $j\in J_m$, then we can write $j$ as a sum of products of the form $b\cdot j'$ where $b\in\P_n$ and $j'\in J_m\cap\P_m$, and for each such summand we have $b\cdot j'\otimes r = b\otimes j'\cdot b = b\otimes 0 =0$. 

For the second map, we let $b\in \P_n$, $b'\in \P_m$, and $r\in \t$, and show that $(bb'+J_m) \otimes r = (b+J_m) \otimes (b'\cdot r)$. It is enough to prove this for $b'\in R\fS_m$ and $b'\in J_m\cap\P_m$ as $\P_m =  R\fS_m\oplus (J_m\cap\P_m)$. For $b'\in R\fS_m$, we get the equation directly from the definition of the tensor product. For $b'\in J_m\cap\P_m$, we note that $bb' \in J_m = P_n \cdot J_m$ and thus $(bb'+J_m) \otimes r$ is zero, as is $(b+J_m) \otimes (b'\cdot r)$ since $b'\cdot r=0$.
\end{proof}

Now recall from \autoref{theorem-inductive} that, under  the hypotheses of \autoref{thm-tor-quotient-Homology-Sm},
\[
\Tor^{\P_n}_\ast (\t, \P_n/J_m)=\begin{cases} \t &\mbox{if } \ast= 0 \\
0 & \mbox{if } \ast>0 \end{cases}.
\]

\begin{proof}[Proof of \autoref{thm-tor-quotient-Homology-Sm}]
The proof of \autoref{thm-tor-quotient-Homology-Sm} now follows exactly as in \cite[Proof of Theorem 4.1]{BHP2021}, replacing the occurrences of~$\Br_n$ with~$\P_n$, and inputting \autoref{lem-quotient-by-Jm-tensor-product} and \autoref{theorem-inductive} as appropriate.
\end{proof}

\section{High connectivity}\label{sec-high connectivity}

We build a complex similar to the one in \cite{HepworthIH} and \cite{BHP2021}. 

\begin{defn}\label{def: complex for Pn}
For~$n$ a non-negative integer, we define the chain complex $C_n=(C_n)_\ast$ of~$\P_n$-modules as follows. The degree~$p$ part $(C_n)_p$ is non-zero in degrees $-1\leq p \leq n-1$, where it is given by
\[
(C_n)_p=\P_n\otimes_{\P_{n-(p+1)}}\t.
\]
So in degree~$-1$ it follows that~$(C_n)_{-1}=\P_n\otimes_{\P_{n}}\t\cong\t$.
For~$0\leq p \leq n-1$ the degree~$p$ differential~$\partial^p$ is given by the alternating sum
\[ \partial^p = \sum_{i=0}^{p} (-1)^i d^p_i \colon (C_n)_p \longrightarrow (C_n)_{p-1}.\]
Where, algebraically, the map~$d^p_i$ for~${1}\leq i\leq p$ is given by
\begin{eqnarray*}
d^p_i \colon \P_n \tens[\P_{n-(p+1)}] \t &\longrightarrow& \P_n \tens[\P_{n-p}] \t\\
x\otimes r &\mapsto& (x\cdot S_{n-p+i-1}\dots S_{n-p})\otimes r
\end{eqnarray*}
and
\begin{eqnarray*}
d^p_0 \colon \P_n \tens[\P_{n-(p+1)}] \t &\longrightarrow& \P_n \tens[\P_{n-p}] \t\\
x\otimes r &\longmapsto& x\otimes r.
\end{eqnarray*}
In other words, when $i=0$ the product $S_{n-p+i-1}\dots S_{n-p}$ is taken to be the empty product, \emph{i.e.}~the identity element.

In terms of diagrams, elements in degree~$p$ can be described as diagrams with an $(n-(p+1))$-box at the top right, as in~\autoref{prop:HomBr} and the paragraph which follows it.
If we label the nodes below the $(n-(p+1))$-box by $0,\dots,p$ from top to bottom, then $d_i^p$ lifts up node $i$ and plugs it into the box.
\end{defn}

We now filter~$C_n$. Note that in~\cite{BHP2021} we first decomposed~$C_n$ based on the number of disjoint blocks on the left, and we could also do that here. However this is not necessary for the proof.

\begin{Def}\label{def-filtration}
We define a filtration 
\[
    F_0C_n
    \subseteq
    F_1C_n
    \subseteq
    \dots
    \subseteq
    F_{\lfloor\frac{n}{2}\rfloor}C_n = C_n
\]
of $C_n$ as follows. 
The~$j$th level $F_jC_n$ is generated by diagrams with at most $j$ blocks that have at least $2$ positive (right hand) nodes and are not connected to the box. Note that this is indeed a filtration, since the boundary map can only decrease the number of blocks on the right {not} connected to the box.
\end{Def}

We briefly recall the definition of the complex of injective words with separators.

\begin{Def}[Injective words with separators]
    Let $X$ be a finite set and let $k\geqslant 0$.
    An \emph{injective word on $X$ with $k$ separators} is a word with letters taken from the set $X\sqcup\{|\}$ consisting of $X$ and the \emph{separator} $|$, where each letter from $X$ appears at most once, and where the separator appears exactly $k$ times.
    When $k=0$, then these are simply the injective words on $X$.
\end{Def}

\begin{Def}[The complex of injective words with separators]
    Let $X$ be a finite set, let $s\geqslant 0$, and let $R$ be a commutative ring.
    The \emph{complex of injective words with $s$ separators} is the $R$-chain complex $W_X^{(s)}$ concentrated in degrees $-1\leq p \leq |X|-1$, and defined as follows.
    In degree $p$, $(W^{(s)}_X)_p$ has basis given by the injective words on $X$ with $s$ separators, and with $(p+1)$ letters from $X$. 
    Thus such a word ${\bf a} \in (W^{(s)}_X)_p$ has length~$s+p+1$. Let~$r=s+p$ and~${\bf a}=a_0a_1\dots a_r$.
    The boundary operator $\partial^p\colon (W^{(s)}_X)_p\to (W^{(s)}_X)_{p-1}$ is defined by the rule
    \[
        \partial^p(a_0a_1\dots a_r)=\sum_{i=0}^r(-1)^ia_0\dots\widehat{a_i}\dots a_r
    \]
    subject to the condition that if the omitted letter is a separator, then the corresponding term is omitted (or identified with $0$).
    In other words, the boundary is the signed sum of the words obtained by deleting the letters that come from $X$ and \emph{not deleting any separators}, but with signs determined by the position of the deleted letter among all letters \emph{including the separator}.:
    \[
        \partial^p(a_0a_1\dots a_r)=\sum_{a_i\in X}(-1)^ia_0\dots\widehat{a_i}\dots a_r
    \]
\end{Def}

We will aim to identify the filtration quotients~$F_jC_n/F_{j-1}C_n$ with a sum of shifted copies of the complex of injective words with separators, as in~\cite{BHP2021}. (Note that in \cite{BHP2021} the argument for the Brauer algebras is somewhat simpler, and so the reader may wish to look at the Brauer proof first.)

To complete this identification, we exhibit a one-to-one correspondence between diagrams and tuples of data. This correspondence is complicated, so we start with the simple example of the tuple corresponding to a diagram with no box, and no restriction on the right hand side blocks. Recall that a diagram is a pictorial way of representing a partition of the set~$\{-n, \ldots , -1,1, \ldots , n \}$.

A diagram $D$ determines, and is determined by,
    a tuple $(L,R,\phi)$ consisting of:
    \begin{itemize}
        \item
        A partition $L$ of $\{-1,\ldots,-n\}$.
        \item
        A partition $R$ of $\{1,\ldots,n\}$.
        \item
        A labelling $\phi\colon R\to \{\emptyset\}\cup L$ 
        with the property that $\phi(r)=\phi(r')$ 
        only when $r=r'$ or $\phi(r)=\phi(r')=\emptyset$.
    \end{itemize}
    The correspondence sends a diagram $D$ to the 
    tuple $(L,R,\phi)$ for which:
    \begin{itemize}
        \item
        $L$ is the induced partition on the left-hand
        nodes $-1,\ldots,-n$.
        \item
        $R$ is the induced partition on the right-hand
        nodes $1,\ldots,n$.
        \item
        $\phi$ labels a block on the right by the 
        (necessarily unique) block on the left to which
        it is attached, if any, and labels it by
        $\emptyset$ otherwise.
    \end{itemize}

    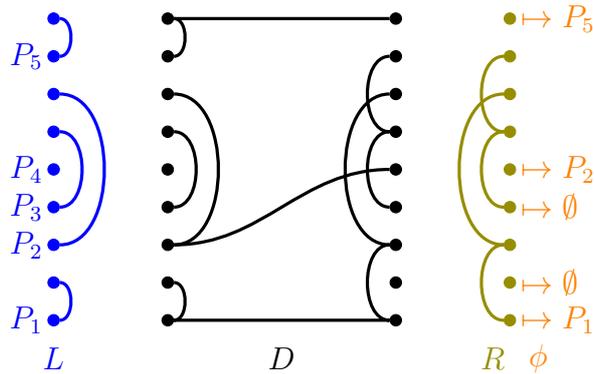
\begin{figure}[h!]
\centering
\[
    \begin{tikzpicture}[x=1.5cm,y=-.5cm,baseline=-2.05cm]
    
        \node[v] (a1) at (0,0) {};
        \node[v] (a2) at (0,1) {};
        \node[v] (a3) at (0,2) {};
        \node[v] (a4) at (0,3) {};
        \node[v] (a5) at (0,4) {};
        \node[v] (a6) at (0,5) {};
        \node[v] (a7) at (0,6) {};
        \node[v] (a8) at (0,7) {};
        \node[v] (a9) at (0,8) {};

        \node[v,blue] (c1) at (-1,0) {};
        \node[v,blue] (c2) at (-1,1) {}; 
        \node[v,blue] (c3) at (-1,2) {};
        \node[v,blue] (c4) at (-1,3) {}; 
        \node[v,blue] (c5) at (-1,4) {};
        \node[v,blue] (c6) at (-1,5) {}; 
        \node[v,blue] (c7) at (-1,6) {};
        \node[v,blue] (c8) at (-1,7) {}; 
        \node[v,blue] (c9) at (-1,8) {};

        \node[v] (b1) at (2,0) {};
        \node[v] (b2) at (2,1) {};
        \node[v] (b3) at (2,2) {};
        \node[v] (b4) at (2,3) {};
        \node[v] (b5) at (2,4) {};
        \node[v] (b6) at (2,5) {};
        \node[v] (b7) at (2,6) {};
        \node[v] (b8) at (2,7) {};
        \node[v] (b9) at (2,8) {};

        \node[v,olive] (e1) at (3,0) {};
        \node[v,olive] (e2) at (3,1) {};
        \node[v,olive] (e3) at (3,2) {};
        \node[v,olive] (e4) at (3,3) {};
        \node[v,olive] (e5) at (3,4) {};
        \node[v,olive] (e6) at (3,5) {};
        \node[v,olive] (e7) at (3,6) {};
        \node[v,olive] (e8) at (3,7) {};
        \node[v,olive] (e9) at (3,8) {};
        
        \draw[e] (a1) to[out=0, in=180] (b1);      
        \draw[e] (a7) to[out=0, in=180] (b5);
        \draw[e] (a1) to[out=0, in=0] (a2);
        \draw[e] (a8) to[out=0, in=0] (a9) to[out=0, in=180] (b9) to[out=180, in=180] (b7) to[out=180, in=180] (b3);
        \draw[e] (a3) to[out=0, in=0]   (a7);
        \draw[e] (a4) to[out=0, in=0]   (a6);
        \draw[e] (b2) to[out=180, in=180](b4) to[out=180, in=180] (b6);

        \draw[e,blue] (c1) to[out=0, in=0] (c2);
        \node[left,blue] at (c9) {$P_1$};
        \node[left,blue] at (c7) {$P_2$};
        \node[left,blue] at (c6) {$P_3$};
        \node[left,blue] at (c5) {$P_4$};
        \node[left,blue] at (c2) {$P_5$};
        \draw[e,blue] (c8) to[out=0, in=0] (c9);
        \draw[e,blue] (c3) to[out=0, in=0]   (c7);
        \draw[e,blue] (c4) to[out=0, in=0]   (c6);
        
        \draw[e,olive] (e9) to[out=180, in=180] (e7) to[out=180, in=180] (e3);
        \draw[e,olive] (e2) to[out=180, in=180] (e4) to[out=180, in=180] (e6);
        \node[right,orange] at (e1) {$\mapsto P_5$};
        \node[right,orange] at (e5) {$\mapsto P_2$};
        \node[right,orange] at (e6) {$\mapsto\emptyset$};
        \node[right,orange] at (e9) {$\mapsto P_1$};
        \node[right,orange] at (e8) {$\mapsto\emptyset$};

        \node[blue] at (-1,9) {$L$};
        \node at (1,9) {$D$};
        \node[olive] at (2.85,9) {$R$};
        \node[orange] at (3.25,9) {$\phi$};
    \end{tikzpicture}
\]
\caption{This figure shows the process of extracting from a diagram~$D$ the tuple $(L,R,\phi)$. Blocks in~$L$, and the labelings of~$R$ are indicated at their lowermost node.}
\label{fig:simplediagramtuple}
\end{figure}

    An example is shown in \autoref{fig:simplediagramtuple}. Here, the process of restricting the partition to the left and right sides of the diagram amounts to discarding
    all the left-to-right connections.
    Those left-to-right connections are instead recorded
    in the labelling $\phi$.
    To see that $\phi$ satisfies the third
    property above because, observe that if it did not, then the diagram would have two distinct blocks $r,r'$ on the right attached to the \emph{same} block on the left.
    That would be a contradiction because then $r$ and $r'$ 
    would in fact themselves be the same.

We now observe that the filtration quotient $F_jC_n/F_{j-1}C_n$ has a basis in degree~$p$ consisting of diagrams which have an~$(n-(p+1))$-box on the right, and exactly~$j$ blocks with $\geq 2$ nodes on the right that are not connected to the box. 
Here, the size of the box is determined by the degree
as in~\autoref{def: complex for Pn},
and the condition on the $j$ blocks follows from the definition
of the filtration given in~\autoref{def-filtration}.
An example is given in~\autoref{fig:example blob diagram}.
\begin{figure}[h!]
\centering
\[
    \begin{tikzpicture}[x=1.5cm,y=-.5cm,baseline=-2.05cm]
    
        \node[v] (a1) at (0,0) {};
        \node[v] (a2) at (0,1) {};
        \node[v] (a3) at (0,2) {};
        \node[v] (a4) at (0,3) {};
        \node[v] (a5) at (0,4) {};
        \node[v] (a6) at (0,5) {};
        \node[v] (a7) at (0,6) {};
        \node[v] (a8) at (0,7) {};
        \node[v] (a9) at (0,8) {};
        
        \node[v] (b3) at (2,2) {};
        \node[v] (b4) at (2,3) {};
        \node[v] (b5) at (2,4) {};
        \node[v] (b6) at (2,5) {};
        \node[v] (b7) at (2,6) {};
        \node[v] (b8) at (2,7) {};
        \node[v] (b9) at (2,8) {};

        \draw[e] (a1) to[out=0, in=180] (2,0);      
        \draw[e] (a7) to[out=0, in=180] (b5);
        \draw[e] (a1) to[out=0, in=0] (a2);
        \draw[e] (a8) to[out=0, in=0] (a9) to[out=0, in=180] (b9) to[out=180, in=180] (b7) to[out=180, in=180] (b3);
        \draw[e] (a3) to[out=0, in=0]   (a7);
        \draw[e] (a4) to[out=0, in=0]   (a6);
        \draw[e] (2,0.5) to[out=180, in=180](b4) to[out=180, in=180] (b6);
        \draw[fill=white, line width=1] (1.9,-0.2) rectangle (2.1,1.2);
        \node at (2,0.5) {$2$};
        \node at (1,9) {$D$};
    \end{tikzpicture}
\]
\caption{An example of a diagram~$D$, when $n=9$,~$j=1$ and $p=6$.}
\label{fig:example blob diagram}
\end{figure}
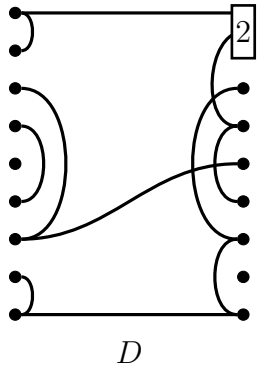
In the next definition, we explain how these basis diagrams determine a tuple of data, analogously to the discussion above. Once this data has been stripped from the diagram, we are left with the desired information of an injective word with separators. In this injective word, the letters encode left-to-right connections for which the block on the right has a single element; and the separators correspond to all other nodes below the box on the right.
There are at least $2j$ of these separators, because 
there are precisely $j$ blocks on the right that have $2$ or more right-hand nodes and are not connected to the box.
Later, in \autoref{lem:phi is an iso} we show how to conversely start with an injective word with separators and our tuple of data and rebuild the diagram.

\begin{Def}\label{def:5-tuple}
A diagram in the basis of $(F_jC_n/F_{j-1}C_n)_p$ determines a tuple
\[
    (P, X, s, Y, f)
\]
consisting of the following data:
\begin{itemize}
    \item
    A partition $P$ of~$\{-1,\dots ,-n\}$.
    \item    
    A subset $X$ of the blocks of $P$.
    \item
    A number~$2j\leq s \leq n -|X|$.
    \item 
    A partition $Y$ of $\{1,\dots, s\}$, such that~$\geq j$ blocks have size~$\geq 2$.
    \item 
    A labelling $f\colon Y \to (\{\emptyset\}\cup P\setminus X) \times \{\square, \neg\square\} $ (where the symbols $\square, \neg\square$ represent ``box'' and ``not-box'' respectively) satisfying the following properties:
    \begin{itemize}
        \item singletons have first label $\emptyset$
        \item no two blocks in $Y$ can have the same first label in $P\setminus X$
        \item exactly~$j$ blocks of size~$\geq 2$ have second label~$\neg\square$
        \item exactly~$n-s-|X|$ blocks have second label~$\square$.
    \end{itemize}
\end{itemize}
The diagram~$D$ determines the tuple as follows (an example is shown in \autoref{fig:highcon}):
\begin{itemize}
    \item
    $P$ is the partition of~$\{-1,\dots , -n\}$ given by restricting the blocks of $D$ to the negative elements, \emph{i.e.}~to the nodes on the left hand side of the diagram.
    \item
    $X$ is the set of blocks in $P$ which, when viewed in~$D$, are connected to exactly one thing on the right (this can be a connection to the box, or to a single node).
    \item
    The number~$s$ is equal to the number of nodes on the right of $D$ not connected to a block in~$X$. 
    These nodes are precisely those which are singletons, or are connected to another element on the right, or to the box.
    Therefore, every node in one of the $j$ blocks of $D$ that have at least $2$ positive (right hand) nodes and are not connected to the box (as in \autoref{def-filtration}) is included and so $s\ge 2j$. 
    Also, none of the nodes that are connected to a block in $X$ are included, so $s \le n - |X|$. It follows that $n-|X|-s$ is the number of blocks connected to the box and to at least one node on the right.
    \item 
    $Y$ is the partition given by restricting~$D$ to the set of $s$ nodes on the right that are not connected to the blocks of~$X$ (we re-label these~$1,\dots, s$, maintaining the order). 
    \item
    The first entry of the labelling~$f$, in $\emptyset \cup P\setminus X$, indicates whether the blocks of~$Y$ are disconnected from the rest of~$D$ (in which case the label is~$\emptyset$), or connected to the left hand side (in which case the label is the block in $P\setminus X$ that they are connected to).
    Singletons in $Y$ cannot be connected to the left because otherwise they would be connected to a block in $X$ on the left. Thus their first label must be $\emptyset$.
    Two blocks in $Y$ cannot be connected to the same block in $P\setminus X$, so two first labels can only be the same if they are both $\emptyset$.
    
    The second entry of the labelling~$f$ is~$\square$ if the block in~$Y$ is connected to the box in~$D$ and~$\neg\square$ if it is not. The condition that there are exactly~$j$ blocks of size~$\geq 2$ with second label~$\neg\square$ accounts for the diagram being in the filtration quotient $F_jC_n/F_{j-1}C_n$. The condition that there are exactly $n-s-|X|$ blocks with second label $\square$ follows from the above observation that this is the number of blocks connected to the box, containing at least one node on the right.
\end{itemize}

The remaining data in the diagram determines an injective word with $s$ separators $\mathbf{a}$, of length $p+1-s$, on the set $X$, obtained as follows: If the $i$th node (from the top) on the right is connected to a block in~$X$, then the $i$th letter of $\mathbf{a}$ is the corresponding element of $X$. Otherwise the $i$th letter of~$\mathbf a$ is a separator, and there are exactly~$s$ of these.
\end{Def}

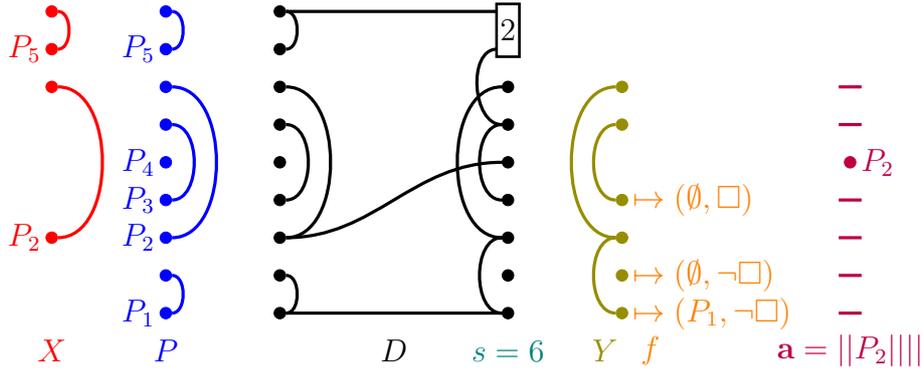
\begin{figure}[h!]
\centering
\[
    \begin{tikzpicture}[x=1.5cm,y=-.5cm,baseline=-2.05cm]
    
        \node[v] (a1) at (0,0) {};
        \node[v] (a2) at (0,1) {};
        \node[v] (a3) at (0,2) {};
        \node[v] (a4) at (0,3) {};
        \node[v] (a5) at (0,4) {};
        \node[v] (a6) at (0,5) {};
        \node[v] (a7) at (0,6) {};
        \node[v] (a8) at (0,7) {};
        \node[v] (a9) at (0,8) {};

        \node[v,blue] (c1) at (-1,0) {};
        \node[v,blue] (c2) at (-1,1) {}; 
        \node[v,blue] (c3) at (-1,2) {};
        \node[v,blue] (c4) at (-1,3) {}; 
        \node[v,blue] (c5) at (-1,4) {};
        \node[v,blue] (c6) at (-1,5) {}; 
        \node[v,blue] (c7) at (-1,6) {};
        \node[v,blue] (c8) at (-1,7) {}; 
        \node[v,blue] (c9) at (-1,8) {};

        \node[v,red] (d1) at (-2,0) {};
        \node[v,red] (d2) at (-2,1) {}; 
        \node[v,red] (d3) at (-2,2) {};
        \node[v,red] (d7) at (-2,6) {};
        
        \node (b1) at (2,0) {};
        \node (b2) at (2,1) {};
        \node[v] (b3) at (2,2) {};
        \node[v] (b4) at (2,3) {};
        \node[v] (b5) at (2,4) {};
        \node[v] (b6) at (2,5) {};
        \node[v] (b7) at (2,6) {};
        \node[v] (b8) at (2,7) {};
        \node[v] (b9) at (2,8) {};

        \node[v,olive] (e3) at (3,2) {};
        \node[v,olive] (e4) at (3,3) {};
        \node[v,olive] (e6) at (3,5) {};
        \node[v,olive] (e7) at (3,6) {};
        \node[v,olive] (e8) at (3,7) {};
        \node[v,olive] (e9) at (3,8) {};
        
        \draw[e] (a1) to[out=0, in=180] (b1);      
        \draw[e] (a7) to[out=0, in=180] (b5);
        \draw[e] (a1) to[out=0, in=0] (a2);
        \draw[e] (a8) to[out=0, in=0] (a9) to[out=0, in=180] (b9) to[out=180, in=180] (b7) to[out=180, in=180] (b3);
        \draw[e] (a3) to[out=0, in=0]   (a7);
        \draw[e] (a4) to[out=0, in=0]   (a6);
        \draw[e] (b2) to[out=180, in=180](b4) to[out=180, in=180] (b6);
        
        \draw[fill=white, line width=1] (1.9,-0.2) rectangle (2.1,1.2);
        \node at (2,0.5) {$2$};
        
        \draw[e,blue] (c1) to[out=0, in=0] (c2);
        \node[left,blue] at (c9) {$P_1$};
        \node[left,blue] at (c7) {$P_2$};
        \node[left,blue] at (c6) {$P_3$};
        \node[left,blue] at (c5) {$P_4$};
        \node[left,blue] at (c2) {$P_5$};
        \draw[e,blue] (c8) to[out=0, in=0] (c9);
        \draw[e,blue] (c3) to[out=0, in=0]   (c7);
        \draw[e,blue] (c4) to[out=0, in=0]   (c6);
        
        \draw[e,red] (d1) to[out=0, in=0] (d2);
        \node[left,red] at (d7) {$P_2$};
        \node[left,red] at (d2) {$P_5$};
        \draw[e,red] (d3) to[out=0, in=0]   (d7);
        
        \draw[e,olive] (e9) to[out=180, in=180] (e7) to[out=180, in=180] (e3);
        \draw[e,olive] (e4) to[out=180, in=180] (e6);
        \node[right,orange] at (e6) {$\mapsto(\emptyset, \square)$};
        \node[right,orange] at (e9) {$\mapsto(P_1, \neg\square)$};
        \node[right,orange] at (e8) {$\mapsto(\emptyset, \neg\square)$};

        \draw[e,purple] (4.9,2) to (5.1,2);
        \draw[e,purple] (4.9,3) to (5.1,3);
        \node[v,purple] at (5,4) {};
        \node[right,purple] at (5,4) {$P_2$};
        \draw[e,purple] (4.9,5) to (5.1,5);
        \draw[e,purple] (4.9,6) to (5.1,6);
        \draw[e,purple] (4.9,7) to (5.1,7);
        \draw[e,purple] (4.9,8) to (5.1,8);
        
        \node[red] at (-2,9) {$X$};
        \node[blue] at (-1,9) {$P$};
        \node at (1,9) {$D$};
        \node[teal] at (2,9) {$s=6$};
        \node[olive] at (2.85,9) {$Y$};
        \node[orange] at (3.25,9) {$f$};
        \node[purple] at (5,9) {$\mathbf{a}=||P_2||||$};
    \end{tikzpicture}
\]
\caption{This figure shows the process of extracting from the diagram~$D$ in \autoref{fig:example blob diagram} the tuple $(P, X, s, Y, f)$ and injective word $\mathbf{a}$ described in \autoref{def:5-tuple}. Blocks in~$P$, and the labeling $f$ of~$Y$ are indicated at their lowermost node.}
\label{fig:highcon}
\end{figure}

\begin{defn}
By the above discussion, we can define a map
\[\Phi_\ast:F_jC_n/F_{j-1}C_n\rightarrow \bigoplus_{P,X,s,Y,f}W_X^{(s)}[-s].\] 
The direct sum is over all $5$-tuples $(P,X,s,Y,f)$ 
satisfying the properties listed at the start of Definition~\ref{def:5-tuple}. 
A diagram~$D$ in $(F_jC_n/F_{j-1}C_n)_p$ is sent by $\Phi_p$ to the injective word with separators~$\mathbf{a}$ in the degree~$p$ part of the summand $W_X^{(s)}[-s]$ corresponding to ~$(P,X,s,Y,f)$, where $(P,X,s,Y,f)$ and $\mathbf{a}$ are obtained as in Definition~\ref{def:5-tuple}.
\end{defn}

We now prove that~$\Phi_\ast$ is a chain map and isomorphism. This will allow us to leverage the high connectivity of the complex of injective words with separators (\cite[Proposition 5.14]{BHP2021}) to a high connectivity result for~$C_n$, via the filtration.

\begin{lem}
    $\Phi_\ast$ is a chain map. 
    \begin{proof}
    First, we claim that the $5$-tuple $(P,X,s,Y, f)$ associated, via~$\Phi_\ast$,  to a basis diagram $D$ in $(F_jC_n/F_{j-1}C_n)_p$ is preserved in all diagrams appearing in the boundary of $D$.
    Recall from \autoref{def: complex for Pn} that the boundary map $\partial^p$ sends a diagram to the alternating sum of the diagrams obtained as follows: work through the nodes on the right of the diagram, and in each case move the node into the box.
    This clearly does not change the left-hand end of the diagram, and therefore all of the diagrams in the boundary have the same $X$ and $P$ associated to them.
    If the node that is moved into the box is a singleton, or was part of a block that was connected to the box, then these nodes are included in the count for~$s$, but after moving it into the box, the resulting diagram either has a singleton in the box or has a loop at the box, and therefore again vanishes. The other nodes counting towards $s$ are those that are part of a block with~$\geq 2$ elements from the right, and are not connected to the box. 
    There are exactly~$j$ such blocks, and so moving any of their nodes into the box gives zero in the quotient $(F_jC_n/F_{j-1}C_n)_p$. 
    Therefore the only nodes we can move into the box without getting zero, are those that are not counted by~$s$, \emph{i.e.}~$s$ remains constant under the boundary map. It follows that~$Y$ and $f$ remain constant, since~$Y$ partitions these~$s$ nodes and $f$ labels them.
    
    The above paragraph demonstrates that $F_jC_n/F_{j-1}C_n$ splits as a direct sum indexed by the 5-tuples $(P,X,s,Y,f)$.
    It now suffices to show that the assignment that sends a diagram with fixed $(P,X,s,Y,f)$ to the corresponding injective word with separators $\mathbf{a}$ respects the boundary map.
    But this is clear: moving a node joined to a block in~$X$ into the box corresponds exactly to deleting one of the non-separator letters from $\mathbf{a}$.
    \end{proof}
\end{lem}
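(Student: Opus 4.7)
The plan is to verify the chain-map property summand-by-summand, after first checking that the decomposition of $F_jC_n/F_{j-1}C_n$ indexed by the tuples $(P,X,s,Y,f)$ is preserved by the differential. Concretely, for a basis diagram $D \in (F_jC_n/F_{j-1}C_n)_p$ I would analyse each term $d_i^p D$ of $\partial^p D = \sum_{i=0}^p (-1)^i d_i^p D$ separately, classifying what type of right-hand node is being plugged into the box.

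The $p+1$ nodes below the box split, according to $\Phi$, into $p+1-s$ nodes attached to a block of $X$ and $s$ nodes that are not. The first step is to argue that every lift of one of the latter $s$ nodes contributes zero in the quotient. There are three sub-cases: a singleton, and a node of a right block already connected to the box, both violate the defining condition of \autoref{prop:HomBr} after insertion into the box (the former putting a singleton inside the box, the latter fusing two blocks through the box), hence give zero in $\P_n\otimes_{\P_{n-p}}\t$; and a node lying in one of the $j$ ``bad'' right blocks (size $\geq 2$, not connected to the box) connects that block to the box, reducing the number of such blocks by one and so pushing the diagram into $F_{j-1}C_n$. Consequently only the $p+1-s$ moves that lift an $X$-attached node survive. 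Each of these clearly preserves the left partition $P$, the subset $X$ (the left-to-right connectivity on $X$-blocks is unchanged except that one single right node is replaced by a box-attachment), the count $s$, and hence also $Y$ and $f$.

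It then remains to compare, within each fixed $(P,X,s,Y,f)$-summand, the restriction of $\partial$ with the differential on $W_X^{(s)}[-s]$. By construction $\Phi$ reads off the $p+1$ nodes below the box from top to bottom, turning each $X$-attached node into its corresponding letter of $X$ and the remaining $s$ nodes into separators; this is an order-preserving bijection between positions of nodes and positions of letters (including separators) in $\mathbf{a}$. Under this identification, the surviving $d_i^p$ corresponds exactly to deleting the $i$th letter of $\mathbf{a}$, and the sign $(-1)^i$ matches the sign convention in the definition of the boundary on $W_X^{(s)}$, which uses the position of the deleted letter among all letters including separators. The degree shift $[-s]$ places $(W_X^{(s)})_{p-s}$ in degree $p$, and no further adjustment is needed.

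The main obstacle will be the careful book-keeping: checking that every ``discarded'' term either vanishes in $\P_n\otimes_{\P_{n-p}}\t$ or drops into $F_{j-1}C_n$, and that the order-preserving bijection between nodes below the box and letters of $\mathbf{a}$ really makes the sign conventions agree without any offset. Both points are definitional rather than deep, but they are exactly where the argument would most easily go astray, and so deserve explicit treatment.
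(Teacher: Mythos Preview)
Your proposal is correct and follows essentially the same route as the paper's proof: you identify the same trichotomy for the $s$ ``non-$X$'' nodes (singletons, nodes already in a box-connected block, nodes in one of the $j$ bad blocks), argue each such lift vanishes either in $\P_n\otimes_{\P_{n-p}}\t$ or in the filtration quotient, and then observe that the surviving lifts of $X$-attached nodes preserve the tuple and correspond to deleting the matching non-separator letter. Your treatment is slightly more explicit than the paper's about the order-preserving bijection and the matching of signs with the convention in $W_X^{(s)}$, but this is exactly the ``book-keeping'' the paper leaves implicit rather than a different idea.
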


\begin{lem}\label{lem:phi is an iso}
    $\Phi_\ast$ is an isomorphism.
    \begin{proof}
    We will prove that $\Phi_\ast$ is an isomorphism by showing that it is obtained from a bijection between the basis of  $(F_jC_n/F_{j-1}C_n)$, which is given by diagrams, and the basis of $\bigoplus_{X,P,s,Y,f}W_X^{(s)}[-s]$, which is given by injective words with separators.
    To do this, we will explain how to (re)build a diagram in $(F_jC_n/F_{j-1}C_n)$ from a tuple~$(P,X, s, Y,f)$ and an injective word with separators $\mathbf{a}$. 
    
    We work in degree $s+k-1$ in the summand~$W_X^{(s)}[-s]$ associated to a $5$-tuple $(P,X,s,Y,f)$. 
    We therefore take an injective word~$\mathbf{a}$ of length~$k$ with~$s$ separators, and we will build a diagram in~$(F_jC_n/F_{j-1}C_n)_{s+k-1}$. 
    We begin with an empty diagram with~$s+k$ nodes on the right hand side, and a box of size~$n-s-k$; this is possible since $s+k\leq s+|X|\leq n$, where the latter inequality is one of the conditions imposed on the $5$-tuple.
    Next, we build all the blocks on the left using~$P$, and draw half-edges from the blocks in $X$ to the right (don't connect these edges to anything yet). We place the injective word with separators vertically against the~$s+k$ nodes on the right hand side, and the word indicates connections from $k$ of the nodes to half-edges from $X$. We connect the remaining half edges from $X$ to the box. The separators indicate the positioning of the~$s$ nodes~$\{1,\dots, s\}$ which are then partitioned by $Y$, and labelled by~$f$. The first labels of~$Y$ indicate which blocks are connected to blocks on the left hand side in~$P\setminus X$. Finally, if the second label of a block in~$Y$ is~$\square$ we connect the block to the box.  
    Note that~$|X|-k$ blocks of~$X$ are connected to the box, and~$n-s-|X|$ blocks of $Y$ are connected to the box, the latter property being another of our conditions on the $5$-tuple. This means that exactly $n-s-k$ distinct blocks are connected to the box, and since this is the size of the box the diagram is non-zero in~$(C_n)_{s+k}$.
    The diagram lies in $F_jC_n/F_{j-1}C_n$ since exactly~$j$ blocks in~$Y$ of size~$\geq 2$ have second label~$\neg\square$ and are therefore not joined to the box, again by our conditions on the $5$-tuple.
    
    The last paragraph shows how to obtain, from a tuple $(P,X,s,Y,f)$ and an injective word $\mathbf{a}\in W_X^{(s)}[-s]_{s+k-1}$, a diagram in the basis of  $(F_jC_n/F_{j-1}C_n)_{s+k-1}$.
    It is now immediate to verify that this is inverse to the effect of $\Phi_\ast$ on bases, and this completes the proof.
    \end{proof}
\end{lem}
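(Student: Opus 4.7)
The plan is to show that $\Phi_\ast$ is an $R$-linear map induced by a bijection between the distinguished bases on either side, which then immediately makes it an isomorphism degree-wise. The basis of $(F_jC_n/F_{j-1}C_n)_p$ is given by the diagrams with an $(n-p-1)$-box on the right and exactly $j$ right-hand blocks of size $\geq 2$ not connected to the box. The basis of $\bigoplus_{(P,X,s,Y,f)} (W_X^{(s)}[-s])_p$ consists of pairs, each being a $5$-tuple satisfying all the conditions of Definition~\ref{def:5-tuple} together with an injective word of length $p+1-s$ on $X$ with $s$ separators. Since $\Phi_\ast$ has already been defined by extracting the tuple and word from a diagram, I only need to construct an inverse.

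The construction I would use goes as follows. Given data $(P,X,s,Y,f)$ and an injective word $\mathbf{a}$ of length $p+1-s$ with $s$ separators on $X$, I start with $n$ nodes on the left, $n$ nodes on the right with an $(n-p-1)$-box occupying the top $n-p-1$ right-hand nodes. I use $P$ to partition the left. I read $\mathbf{a}$ from top to bottom against the $p+1$ nodes under the box: each letter from $X$ dictates that the corresponding right-hand node is joined to that block on the left; the $|X|$-many blocks in $X$ not appearing as letters of $\mathbf{a}$ are attached directly to the box. The $s$ separator positions are then relabelled $1,\dots,s$ from top to bottom, partitioned according to $Y$, and each block of $Y$ is joined to its first $f$-label (a block in $P\setminus X$ or nothing) and connected to the box precisely when its second $f$-label is $\square$.

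I would then verify that this reconstruction lands in $(F_jC_n/F_{j-1}C_n)_p$ as a non-zero basis diagram. The box has $n-p-1$ slots, and the total number of distinct blocks attached to it is $(|X|-(p+1-s))+(n-s-|X|)=n-p-1$, matching the box size exactly by the condition $s\leq n-|X|$ built into the $5$-tuple; this uses both parts of that inequality and ensures the diagram is non-zero in $\P_n\otimes_{\P_{n-(p+1)}}\t$. The filtration level is correct because the $j$ right-hand blocks of size $\geq 2$ not connected to the box are precisely the blocks of $Y$ with second label $\neg\square$, which the definition requires to number exactly $j$.

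The final step is to check that extraction and reconstruction are mutually inverse, which is routine: the data $(P,X,s,Y,f)$ is visibly recovered, and the word $\mathbf{a}$ corresponds exactly to reading which nodes under the box are attached to blocks in $X$ versus being separators for the $Y$-nodes. The main obstacle, and what makes this a real verification rather than a triviality, is the careful bookkeeping of the numerical conditions in Definition~\ref{def:5-tuple} (the bounds $2j\leq s\leq n-|X|$, the number of blocks with second label $\square$ equal to $n-s-|X|$, and exactly $j$ blocks of size $\geq 2$ with label $\neg\square$). These constraints are designed so that the reconstructed diagram has exactly the right number of box-connected blocks and exactly $j$ offending right-hand blocks; without any one of them, the reconstruction would either fail to be in $C_n$, be zero, or land in the wrong filtration stratum. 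Once the bijection on bases is in place, the isomorphism statement for the free $R$-modules in each degree follows immediately.
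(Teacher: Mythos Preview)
Your proposal is correct and follows essentially the same strategy as the paper: you construct the explicit inverse on basis diagrams from a tuple $(P,X,s,Y,f)$ and a word $\mathbf{a}$, then verify via the numerical constraints in Definition~\ref{def:5-tuple} that the result is a nonzero diagram in the right filtration stratum. One small slip: when you write ``the $|X|$-many blocks in $X$ not appearing as letters of $\mathbf{a}$'' you presumably mean the $|X|-(p+1-s)$ blocks of $X$ not used in $\mathbf{a}$, as your subsequent box-count computation makes clear.
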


\begin{prop}\label{prop: filtration quotients are highly connected}
For all $0\leq j\leq \lfloor \frac{n}{2}\rfloor$, the filtration quotients~$F_jC_n/F_{j-1}C_n$ satisfy $H_i(F_jC_n/F_{j-1}C_n)=0$ for~$i\leq \frac{n-3}{2}$.
\begin{proof}
We first consider the case $n=0$, where the only possibility is that $j=0$ so that $F_0C_0 = C_0$.
The claim is then that $H_i(C_0)=0$ for $i\leq -3/2$, but since $C_0$ consists of a single copy of $\t$ in degree $-1$, this is immediate.

We now consider the case $n>0$.
Using \autoref{lem:phi is an iso} this is equivalent to the homology of $W_X^{(s)}[-s]$ vanishing in the desired range, for each $5$-tuple $(P,X,s,Y,f)$ satisfying the conditions of~\autoref{def:5-tuple}.
By~\cite[Proposition 5.14.]{BHP2021},~$H_i(W_X^{(s)})=0$ for~$i\leq |X|-2$, so that~$H_i(W_X^{(s)}[-s])=0$ for~$i\leq |X|+s-2$. 
It will therefore suffice to show that 
$\left\lfloor\frac{n-3}{2}\right\rfloor\leq |X|+s-2$,
or equivalently:
\[
    \begin{cases}
        n\leq 2|X| + 2s & \text{if $n$ even}\\
        n \le 2|X| +2s -1& \text{if $n$ odd.}
    \end{cases}
\]
Let us first prove that we always have $n \le 2|X| + 2s$.
Our conditions on the $5$-tuple $(P,X,s,Y,f)$ mean that $n-s-|X|$ is the number of blocks of $Y$ with second $f$-label $\square$, so that in particular $n-s-|X|\leq |Y|$. 
And since $Y$ is a partition of $\{1,\dots,s\}$ we have $|Y|\leq s$.
Combining the last two inequalities and rearranging gives us
$n\leq |X| + 2s$.
Because $|X|\ge 0$, we therefore have  $n\le 2|X| + 2s$. In particular, this proves the proposition if $n$ is even. If $n$ is odd, it certainly cannot be equal to $2|X|+2s$ which is even. Therefore it can be at most one smaller: $n\le 2|X|+2s-1$.
\end{proof}
\end{prop}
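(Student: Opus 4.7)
The plan is to reduce the statement to the known high connectivity of the complex of injective words with separators and then do a short combinatorial argument on the parameters of the 5-tuple. First I would dispose of the edge case $n=0$: here the only option is $j=0$ and $F_0C_0 = C_0$ is concentrated in degree $-1$, so the range $i \le -3/2$ is vacuous. From now on assume $n\ge 1$.

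Next I would invoke the chain isomorphism $\Phi_\ast$ built in the preceding lemma to identify
\[
F_jC_n/F_{j-1}C_n \;\cong\; \bigoplus_{(P,X,s,Y,f)} W_X^{(s)}[-s],
\]
where the sum runs over 5-tuples satisfying the conditions of Definition~\ref{def:5-tuple}. Since homology commutes with direct sums, it suffices to bound $H_i$ on each summand. Applying the input from \cite[Proposition 5.14]{BHP2021}, which gives $H_i(W_X^{(s)}) = 0$ for $i \le |X|-2$, and accounting for the degree shift, each summand is $(|X|+s-2)$-connected. So the proposition reduces to showing that for every admissible 5-tuple,
\[
\left\lfloor \tfrac{n-3}{2} \right\rfloor \;\le\; |X| + s - 2.
\]

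The main (and essentially only) remaining step is this numerical inequality, and this is where the constraints imposed on a valid 5-tuple must be used. The key observation is that among the conditions on $f$, exactly $n-s-|X|$ blocks of $Y$ carry the second label $\square$; since $Y$ is a partition of $\{1,\dots,s\}$ we have $|Y|\le s$, and therefore
\[
n - s - |X| \;\le\; |Y| \;\le\; s,
\]
which rearranges to $n \le |X| + 2s$. Combined with $|X| \ge 0$ this gives $n \le 2|X|+2s$, which is already what we need when $n$ is even, since then $\lfloor(n-3)/2\rfloor = n/2 - 2$. When $n$ is odd, the right-hand side $2|X|+2s$ is even, hence the inequality $n\le 2|X|+2s$ actually forces the strict improvement $n \le 2|X|+2s-1$, i.e. $(n-3)/2 \le |X|+s-2$, as required.

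I expect this last parity trick to be the main (though still very small) subtlety: the naive estimate $n\le 2|X|+2s$ gives the even case directly but is off by one in the odd case, and one must notice that $n$ odd forces a sharper bound. Otherwise the proof is essentially a bookkeeping exercise chaining together Lemma~\ref{lem:phi is an iso}, \cite[Proposition 5.14]{BHP2021}, and the built-in constraints of Definition~\ref{def:5-tuple}.
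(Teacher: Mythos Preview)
Your proposal is correct and follows essentially the same route as the paper's proof: dispose of $n=0$, use the isomorphism $\Phi_\ast$ to reduce to the summands $W_X^{(s)}[-s]$, apply \cite[Proposition~5.14]{BHP2021}, and then derive $n\le |X|+2s$ from the constraint that exactly $n-s-|X|$ blocks of $Y$ carry the label $\square$ together with $|Y|\le s$, finishing with the parity observation for odd $n$. The only cosmetic difference is that you make the step ``homology commutes with direct sums'' explicit, whereas the paper leaves it implicit.
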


\begin{thm}\label{thm-acyclic}
$H_i(C_n)=0$ for~$i\leq \frac{n-3}{2}$.
\begin{proof}
By \autoref{prop: filtration quotients are highly connected}, the homology of the filtration quotient $F_jC_n/F_{j-1}C_n$ vanishes in degrees $i\leq \frac{n-3}{2}$ for all~$j$. The same then holds for $C_n$ itself by considering the long exact sequences associated to the short exact sequences $0\to F_{j-1}C_n\to F_jC_n\to F_jC_n/F_{j-1}C_n\to 0$.
\end{proof}
\end{thm}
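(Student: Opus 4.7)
The plan is to transfer the acyclicity of the filtration quotients established in Proposition~\ref{prop: filtration quotients are highly connected} to the whole complex $C_n$, using the filtration
\[
    F_0 C_n \subseteq F_1 C_n \subseteq \dots \subseteq F_{\lfloor n/2 \rfloor} C_n = C_n
\]
introduced in Definition~\ref{def-filtration}. Specifically, I would proceed by induction on $j$ in the range $0 \le j \le \lfloor n/2 \rfloor$ to show that $H_i(F_j C_n) = 0$ whenever $i \le \frac{n-3}{2}$. Taking $j = \lfloor n/2 \rfloor$ then gives the statement of the theorem.

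For the base case $j = 0$, since $F_{-1} C_n = 0$ we have an identification $F_0 C_n \cong F_0 C_n / F_{-1} C_n$, so the base case is Proposition~\ref{prop: filtration quotients are highly connected} applied with $j = 0$. For the inductive step, I would consider the short exact sequence of chain complexes
\[
    0 \longrightarrow F_{j-1} C_n \longrightarrow F_j C_n \longrightarrow F_j C_n / F_{j-1} C_n \longrightarrow 0
\]
and the associated long exact sequence in homology, whose relevant segment is
\[
    H_i(F_{j-1} C_n) \longrightarrow H_i(F_j C_n) \longrightarrow H_i(F_j C_n / F_{j-1} C_n).
\]
In the range $i \le \frac{n-3}{2}$, the left-hand term vanishes by the inductive hypothesis and the right-hand term vanishes by Proposition~\ref{prop: filtration quotients are highly connected}, forcing the middle term to vanish as well. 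This completes the induction.

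There is no genuine obstacle at this stage: all of the substantive work is packed into the construction of the filtration and the identification of its quotients with shifted copies of the complex of injective words with separators, and then into the uniform bound $i \le \frac{n-3}{2}$ established in Proposition~\ref{prop: filtration quotients are highly connected}. The crucial feature exploited here is simply that this bound does not depend on $j$, so that a finite iteration of the long exact sequence pushes the vanishing all the way up to $C_n$ itself without any loss in the range.
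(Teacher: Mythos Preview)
Your proof is correct and follows essentially the same approach as the paper: both use the long exact sequences arising from the short exact sequences $0\to F_{j-1}C_n\to F_jC_n\to F_jC_n/F_{j-1}C_n\to 0$ to propagate the vanishing from Proposition~\ref{prop: filtration quotients are highly connected} up the filtration to $C_n$. You have simply made the induction on $j$ explicit, whereas the paper leaves it implicit.
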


\section{Proof of \autoref{thmb}}\label{sec-proof of thmb}

The proof of \autoref{thmb} directly mirrors the proof of~\cite[Theorem~B]{BHP2021}, with the following substitutions:
\begin{itemize}
    \item
    All instances of the Brauer algebra should be replaced with the partition algebra.
    \item
    The maps $\iota$ and $\pi$ of~\cite{BHP2021} should be replaced by the maps of the same name in the current paper.
    Similarly for the complex $C_\ast$.
    \item
    Theorem~5.4 of~\cite{BHP2021} should be replaced with~\autoref{thm-acyclic}.
    \item
    Theorem~4.1 of~\cite{BHP2021} should be replaced with~\autoref{thm-tor-quotient-Homology-Sm}.
\end{itemize}
We note that in the second paragraph of the proof of~\cite[Theorem~6.3]{BHP2021}, there is an error, and the words `odd' and `even' should be transposed.

\bibliographystyle{alpha}
\bibliography{repstab.bib}		

\end{document}